\newcommand{\R}[1]{\ensuremath{\mathbb{R}^{#1}}}
\renewcommand{\S}[1]{\ensuremath{\mathbb{S}_{#1}}}
\newcommand{\cJ}{\bm{\mathcal{J}}}
\newcommand{\ubar}{\Bar{u}}
\newcommand{\bfX}{\mathbf{X}}
\newcommand{\bfM}{\mathbf{M}}
\newcommand{\Zplus}{\mathbb{Z}\sub{+}}
\newcommand{\E}[1]{\mathbb{E}\left[#1\right]}
\newcommand{\Cov}[1]{\mathrm{Cov}\left( #1 \right)}
\newcommand{\tr}[1]{\ensuremath{\mathrm{tr} (#1) }}
\newcommand{\bdiag}[1]{\ensuremath{\mathrm{bdiag} (#1)}}
\newcommand{\transpose}{\ensuremath{^{\mathrm{T}}}}
\newcommand{\sub}[1]{\ensuremath{_{#1}}}
\newcommand{\spr}[1]{\ensuremath{^{#1}}}
\newcommand{\inv}[1]{\ensuremath{{#1}^{-1}}}
\newcommand{\eq}{=}
\newtheorem{proposition}{Proposition}
\newtheorem{problem}{Problem}
\theoremstyle{definition}
\newtheorem{remark}{Remark}
\newtheorem{example}{Example}
\renewenvironment{proof}
    {\noindent \textit{Proof.}}
    {\hfill \ensuremath{\blacksquare}}
\newenvironment{smat}{\left[ \begin{smallmatrix} }{\end{smallmatrix} \right]}
\title{Covariance Steering of Discrete-Time Linear Systems with Mixed Multiplicative and Additive Noise}
\author{Isin M. Balci \and Efstathios Bakolas \thanks{This research has been supported in part by NSF award CMMI-1937957.
I. M. Balci (PhD student) and E. Bakolas (Associate Professor) are with the Department of Aerospace Engineering and Engineering Mechanics, The University of Texas at Austin, Austin, Texas 78712-1221, USA, Email: isinmertbalci@utexas.edu, bakolas@austin.utexas.edu. }}
\date{August 2022}
\begin{document}

\maketitle
\thispagestyle{empty}
\pagestyle{empty}

\begin{abstract}
    In this paper, we study the covariance steering (CS) problem for discrete-time linear systems subject to multiplicative and additive noise. Specifically, we consider two variants of the so-called CS problem. The goal of the first problem, which is called the exact CS problem, is to steer the mean and the covariance of the state process to their desired values in finite time. In the second one, which is called the ``relaxed'' CS problem, the covariance assignment constraint is relaxed into a positive semi-definite constraint. We show that the relaxed CS problem can be cast as an equivalent convex semi-definite program (SDP) after applying suitable variable transformations and constraint relaxations. Furthermore, we propose a two-step solution procedure for the exact CS problem based on the relaxed problem formulation which returns a feasible solution, if there exists one. Finally, results from numerical experiments are provided to show the efficacy of the proposed solution methods.
\end{abstract}

\section{Introduction}\label{s:introduction}
In this paper, we study the problem of characterizing causal feedback policies for discrete-time stochastic linear systems which steer the state mean and the state covariance to desired values.
This class of problems is known as Covariance Steering (CS) problems in the relevant literature \cite{p:bakolas2018covarianceautomatica, p:chen2015covariance1, p:goldshtein2017covariance}. Typically, CS problems are addressed only for the case of additive noise. By contrast, in this work, we consider discrete-time linear systems which are excited by both additive and multiplicative noise. 
Throughout the paper, we will study two variations of the CS problem with mixed multiplicative and additive noise. 
In the first problem formulation, the main goal is defined as finding a causal policy which will steer the mean and the covariance of the state process to their respective desired values in finite time. 
In the second problem formulation, we keep the hard constraint on the state mean but the state covariance constraint is ``relaxed'' into a positive semi-definite constraint.
We will refer to the former variation of the CS problem as the ``exact'' CS problem and the latter as the ``relaxed'' CS problem.

\textit{Literature Review:} The early attempts to address CS problems were focused on the infinite horizon case for linear time-invariant systems in which the set of assignable covariance matrices can be characterized in terms of linear matrix inequalities (LMI) \cite{p:skelton1987covassignment, p:skelton1992improvedcovariance}. 
More recently, finite horizon CS problems have gained significant attention. 
Unconstrained CS problem formulations with continuous-time linear systems were first addressed in \cite{p:chen2015covariance1, p:chen2015covariance2} whereas the constrained CS problems for discrete-time linear systems are considered in \cite{p:bakolas2018covarianceautomatica, p:okamoto2018covariancechance}.
Soft constrained versions of the CS problems, in which the terminal covariance assignment constraint is replaced by a terminal cost term which corresponds to the (squared) Wasserstein distance between the terminal state distribution and the goal (Gaussian) distribution, are studied in \cite{p:halder2016covariancewasserstein, p:balci2022exactcovariancewasserstein}.
Furthermore, CS problems for partially observable systems are studied in \cite{p:kotsalis2021robustcovariance} in which control policies based on histories of ``purified outputs'' are utilized.
In all of the aforementioned papers, the system model is assumed to be linear and the noise process is assumed to be an additive white noise process. 

The problem of finding stabilizing controllers for linear systems subject to multiplicative noise using LMIs has been studied in \cite{p:el1995statemultiplicative}.
Model Predictive Control (MPC) algorithms for linear systems subject to state and control multiplicative noise have been developed in \cite{p:cannonmultmpc, p:farinaMPClinearmult}.
Estimation and control design problems are studied in \cite{p:todorovmult}. 
More recently, sampling-based methods for learning the optimal state feedback controllers for linear systems subject to multiplicative noise have been proposed in \cite{p:gravell2020learningmultiplicative, p:coppens2022safelearningmultnoise}.

The CS problem with continuous-time dynamics and multiplicative noise is studied in \cite{p:liu2022continuouscovmultiplicative} where a solution based on coupled Riccati equations is obtained.
However, the authors of \cite{p:liu2022continuouscovmultiplicative} consider the case in which the system is only affected by the state multiplicative noise, and the state mean at the initial stage and its desired terminal value are both zero. 
In our work, we consider a more general problem with system dynamics having both state and control multiplicative noise, and nonzero initial and desired mean dynamics. 
To the best of our knowledge, this is the first paper that addresses the finite horizon CS problem for discrete-time linear systems excited by both state and control multiplicative noise. 

 \textit{Main Contributions:}
First, we present a formulation of the CS problem as a nonlinear program (NLP) based on an affine state feedback policy parametrization and subsequently, we show that this NLP can be transformed into an equivalent semi-definite program by applying suitable variable transformations and semi-definite relaxations. 
Second, we show that SDP relaxations, which are tight in the relaxed problem, are loose in the exact CS problem. 
In view of these results, we propose a two-step procedure to solve the exact CS problem which is based on the solution to the relaxed problem. Third, we provide an instance of the exact CS problem in which the semi-definite relaxations used in the second step of the solution procedure are loose.
Then, we show that the semi-definite relaxations in the second step of the solution procedure are tight if there is no control multiplicative noise acting on the system.

 \textit{Organization of the Paper:} In Section \ref{s:problem-formulation}, we provide the precise problem formulation. 
We present the main theoretical contributions in Section \ref{s:main-results} and we provide solution methods for the relaxed CS problem and the exact CS in Subsections \ref{ss:relaxedCS} and \ref{ss:exactCS}, respectively. 
The results from numerical experiments are presented and discussed in Section \ref{s:numerical-experiments}. 
Finally, we finalize the paper in Section \ref{s:conclusion} with concluding remarks and possible future research directions.

\section{Problem Formulation}\label{s:problem-formulation}
\subsection{Notation}\label{ss:notation}
The space of $n$-dimensional real vectors is denoted as $\R{n}$ and the space of $n\times m$ matrices as $\R{n \times m}$. 
The set of positive integers is denoted as $\Zplus$.
The cone of $n\times n$ positive semi-definite and positive definite matrices are denoted by $\S{n}^{+}$ and $\S{n}^{++}$, respectively. 
$\mathbf{0}$ denotes the zero matrix (or vector) with the appropriate dimension. 
We use $I\sub{n}$ to denote the $n \times n$ identity matrix.
For $A, B \in \S{n}$, $A \succ B$ ($A \succeq B$) means $A - B \in \S{n}^{++}$ ($A - B \in\S{n}^{+}$).
We use $\tr{\cdot}$ to denote the trace operator. 
$\bdiag{A_1, A_2, \dots, A_{N}}$ denotes the block diagonal matrix whose diagonal blocks are the matrices $A_1, A_2, \dots, A_{N}$. 
The expectation and the covariance of a random variable $x$ are denoted as $\E{x}$ and $\Cov{x}$, respectively.
\subsection{Problem Setup and Formulation}\label{ss:problem-setup}
We consider discrete-time linear systems of the form:
\begin{align}\label{eq:system-dynamics}
    &x_{k+1} = \left(A_k + \sum_{\ell=1}^{M} \delta_{k,\ell} \Bar{A}_{k,\ell} \right) x_k \nonumber\\
    &\qquad \quad + \left( B_k + \sum_{\ell=1}^{M} \gamma_{k,\ell} \Bar{B}_{k,\ell} \right) u_k + w_k + d_k
\end{align}
where $x\sub{k} \in \R{n}$, $u\sub{k}\in \R{m}$ are the state and the input processes, respectively.  
We assume that $\E{x\sub{0}} = \mu\sub{0} \in \R{n}$ and $\Cov{x\sub{0}} = \Sigma\sub{0} \in \mathbb{S}\sub{n}\spr{++}$ are given.
$d\sub{k} \in \R{n}$ is known for all $k \in \{ 0, \dots, N-1 \}$.
The state and control multiplicative noise processes are represented by i.i.d. random variables $\delta\sub{k,\ell}, \gamma\sub{k, \ell}$ where $\E{ \delta\sub{k,\ell}} = \E{\gamma\sub{k,\ell} } = 0$ and $\Cov{\delta\sub{k,\ell}} = \Cov{\gamma\sub{k,\ell}} = 1$. 
Note that this representation of the multiplicative noise process is not restrictive i.e. any random matrix $S \in \R{n \times m}$ whose entries have finite second moments can be represented in this form as shown in \cite{p:summers2020sysidmultnoise}.
The additive noise $\{ w\sub{k} \}\sub{k=0}\spr{N-1}$ is also an i.i.d. random process with $\E{w\sub{k}} = \bm{0}$ and $\Cov{w\sub{k}} = W\sub{k} \in \mathbb{S}\sub{n}\spr{+}$.

\begin{remark}
Note that the only assumption that we make on the distributions of the initial state $x\sub{0}$ and the noise processes $w\sub{k}, \delta\sub{k,\ell}, \gamma\sub{k,\ell}$ is that their first two moments are known.
Thus, the distribution of the initial state $x\sub{0}$, the random variables corresponding to multiplicative noise $\delta\sub{k,\ell}, \gamma\sub{k, \ell}$ and additive noise $w\sub{k}$ can have any distribution with given covariance values (they are not necessarily Gaussian).
\end{remark}

A state feedback control policy for the system in \eqref{eq:system-dynamics} is a sequence $\pi = \{ \pi\sub{k}\}\sub{k=0}\spr{N-1}$ where each $\pi\sub{k} : \R{n} \rightarrow \R{m}$ is a function from the state $x\sub{k}$ to control input $u\sub{k}$. 
We denote the set of admissible policies by $\Pi$.

Throughout the paper, we consider a performance measure with a standard quadratic running cost:
\begin{equation}\label{eq:cost-function}
    J(U\sub{0:N-1}, X\sub{0:N}) := \sum_{k=0}^{N-1} u_k\transpose R_k u_k +  x_k\transpose Q_k x_k
\end{equation}
where $U\sub{0:N-1}= \{u\sub{0},\dots, u\sub{N-1} \}$ (input process) and $X\sub{0:N} = \{x\sub{0}, \dots, x\sub{N} \}$ (state process). Now, we can formally state the main problems of interest as follows:

\begin{problem}[Exact Covariance Steering Problem]\label{problem:exact-problem-definition}
Let $N \in \Zplus$,
$\{ A\sub{k}, B\sub{k}, d\sub{k}, W\sub{k}, \{ \Bar{A}\sub{k,\ell} \}\sub{\ell=1}\spr{M}, \{ \Bar{B}\sub{k, \ell} \}\sub{\ell=1}\spr{M} \}\sub{k=0}\spr{N-1}$,
$\mu_0, \mu_\mathrm{d} \in \R{n}, \Sigma_0, \Sigma_\mathrm{d} \in \S{n}^{++}$, 
and 
$\{R\sub{k}, Q\sub{k}\}\sub{k=0}\spr{N-1}$ where $R\sub{k}, Q\sub{k} \in \mathbb{S}\sub{m}\spr{++}, W\sub{k}\in \mathbb{S}\sub{n}\spr{+}$ be given.
Then, find an admissible control policy $\pi\spr{\star} \in \Pi $ that solves the following stochastic optimal control problem:
\begin{subequations}\label{eq:exact-problem}
\begin{align}
    \min_{\pi \in \Pi} &~~ \E{ J(U\sub{0:N-1}, X\sub{0:N}) } \label{eq:first-problem-objective}\\
    \mathrm{s.t.} & ~~ \eqref{eq:system-dynamics} \\
    & ~~ \E{x_N} = \mu_\mathrm{d} \label{eq:first-problem-terminal-mean-constr}\\
    & ~~ \Cov{x_N} = \Sigma_\mathrm{d} \label{eq:first-problem-terminal-cov-constr}\\
    & ~~ u_k = \pi\sub{k}(x\sub{k}) \label{eq:first-problem-policy-constr}
\end{align}
\end{subequations}
\end{problem}
Many practical applications of stochastic optimal control problems require the terminal covariance of the state to be 
upper bounded by some acceptable covariance matrix in the L\"{o}wner partial order sense.
Therefore, we also consider the `relaxed' variation of the exact CS problem in which the terminal covariance constraint in \eqref{eq:first-problem-terminal-cov-constr} is relaxed to the LMI constraint in \eqref{eq:relaxed-problem-terminal-cov-constr}.

\begin{problem}[Relaxed Covariance Steering Problem]\label{problem:relaxed-problem-definition}
Let $N \in \Zplus$,
$\{ A\sub{k}, B\sub{k}, d\sub{k}, W\sub{k}, \{ \Bar{A}\sub{k,\ell} \}\sub{\ell=1}\spr{M}, \{ \Bar{B}\sub{k, \ell} \}\sub{\ell=1}\spr{M} \}\sub{k=0}\spr{N-1}$,
$\mu_0, \mu_\mathrm{d} \in \R{n}, \Sigma_0, \Sigma_\mathrm{d} \in \S{n}^{++}$, 
and 
$\{R\sub{k}, Q\sub{k}\}\sub{k=0}\spr{N-1}$ where $R\sub{k} \in \mathbb{S}\sub{m}\spr{++}, W\sub{k}, Q\sub{k} \in \mathbb{S}\sub{n}\spr{+}$ be given.
Then, find an admissible control policy $\pi\spr{\star} \in \Pi $ that solves the following stochastic optimal control problem:
\begin{subequations}\label{eq:relaxed-problem}
\begin{align}
    \min_{\pi \in \Pi} &~~ \E{J(U\sub{0:N-1}, X\sub{0:N})} \\
    \mathrm{s.t.} & ~~ \eqref{eq:system-dynamics}, \eqref{eq:first-problem-terminal-mean-constr},  \eqref{eq:first-problem-policy-constr} \\
    & ~~ \Cov{x_N} \preceq \Sigma_\mathrm{d} \label{eq:relaxed-problem-terminal-cov-constr}
\end{align}
\end{subequations}
\end{problem}

\begin{remark}
Apart from having practical importance, our proposed solution procedure for Problem \ref{problem:exact-problem-definition} (exact CS) requires the optimal policy parameters obtained by solving Problem \ref{problem:relaxed-problem-definition} as explained in Section \ref{s:main-results}.
\end{remark}

\section{Main Results}\label{s:main-results}
Since the proposed solution method for Problem \ref{problem:exact-problem-definition} requires the solution obtained by solving Problem \ref{problem:relaxed-problem-definition}, we first present our results on the relaxed CS problem. 
Both Problem \ref{problem:exact-problem-definition} and Problem \ref{problem:relaxed-problem-definition} are stochastic optimal control problems over infinite dimensional policy spaces which make them computationally intractable for most cases. 
However, it has been shown that the optimal policy for the CS problems is in the form of an affine state feedback for both continuous-time \cite{p:chen2015covariance1} and discrete-time \cite{p:balci2021convexity-wasserstein, p:goldshtein2017covariance, p:ito2022maxentropyCS}. 
Thus, we restrict the set of policies that we optimize over to the set of affine state feedback policies which is denoted as $\Pi\spr{sf}$. 
In particular, a policy $\pi = \{\pi\sub{k}\}\sub{k=0}\spr{N-1} \in \Pi\spr{sf}$ is given as 
\begin{align}\label{eq:state-feedback-control-policy}
    \pi_k (x\sub{k}) = \Bar{u}_k + K_k (x_k - \mu_k),
\end{align}
for every $\pi\sub{k}:\R{n} \rightarrow \R{m}$ where $\mu\sub{k} = \E{x\sub{k}}$.
With this formulation, the policy space $\Pi\spr{sf}$ is parametrized by a finite number of decision variables which are $\{\Bar{u}\sub{k}, K\sub{k} \}\sub{k=0}\spr{N-1}$ where $\ubar\sub{k} \in \R{M}, K\sub{k}\in \R{m\times n}$.

Under the policy parameterization defined in \eqref{eq:state-feedback-control-policy}, the mean and the covariance dynamics of the state process $x\sub{k}$ obey the following recursive equations:
\begin{align}
    \mu_{k+1} &= A_k \mu_k + B_k \Bar{u}_k + d_k \label{eq:mean-system-dynamics} \\ 
    \Sigma_{k+1} & = (A_k + B_k K_k) \Sigma_k (A_k + B_k K_k)\transpose + W_k \nonumber \\
    & ~~~ + \sum_{\ell=0}^{M} \Bar{B}_{k,\ell} (K_k \Sigma_k K_k\transpose + \ubar\sub{k} \ubar\sub{k}\transpose) \Bar{B}_{k,\ell}\transpose \nonumber \\
    & ~~~ + \sum_{\ell=0}^{M} \Bar{A}_{k,\ell} (\Sigma_k + \mu_k \mu_k\transpose) \Bar{A}_{k,\ell}\transpose \label{eq:covariance-system-dynamics}
\end{align}
where $\Sigma\sub{k} = \Cov{x\sub{k}}$ and we used the fact that random variables $\delta\sub{k, \ell}$ and $\gamma\sub{k,\ell}$ are i.i.d. random processes. 
Besides the system dynamics, we need to represent the objective  function $\E{J(U\sub{0:N-1}, X\sub{0:N})}$ in terms of the policy parameters $\{ \Bar{u}\sub{k}, K\sub{k}\}\sub{k=0}\spr{N-1}$ where $J(U\sub{0:N-1}, X\sub{0:N})$ is defined in \eqref{eq:cost-function} to formulate both Problem \ref{problem:exact-problem-definition} and Problem \ref{problem:relaxed-problem-definition} as finite dimensional nonlinear programs.
To this aim, we use the following identities:
\begin{subequations}\label{eq:quadratic-cost-transformations}
\begin{align}
    \E{u_k\transpose R_k u_k} & = \tr{R_k (\Cov{u_k} + \Bar{u}_k \Bar{u}_k\transpose) } \nonumber \\
    & = \tr{R_k \Bar{u}_k \Bar{u}_k\transpose } + \tr{R_k K_k \Sigma_k K_k\transpose} \\
    \E{x_k\transpose Q_k x_k} & = \tr{Q_k (\Cov{x_k} + \mu_k \mu_k\transpose)} \nonumber \\
    & = \tr{Q_k \mu_k \mu_k\transpose} + \tr{Q_k \Sigma_k}
\end{align}
\end{subequations}
whose derivation is based on the linearity of the expectation operator $\E{\cdot}$ and the cyclic permutation property of the trace operator $\tr{\cdot}$.
By the summation of the equalities in  \eqref{eq:quadratic-cost-transformations} over all $k \in \{0, \dots, N-1 \}$, we observe that
\begin{align}\label{eq:calJdefinition}
    \E{J(U\sub{0:N-1}, X\sub{0:N})} & = \sum\sub{k=0}\spr{N-1} \tr{R\sub{k} (\Bar{u}\sub{k} \Bar{u}\sub{k}\transpose + K\sub{k} \Sigma\sub{k} K\sub{k}\transpose ) } \nonumber \\
& \qquad \quad + \tr{Q\sub{k} (\mu\sub{k} \mu\sub{k}\transpose + \Sigma\sub{k})} \nonumber \\
& =: \cJ (\{ \Bar{u}\sub{k}, K\sub{k}, \mu\sub{k}, \Sigma\sub{k} \}\sub{k=0}\spr{N-1})
\end{align}

Now that we have written both the dynamics of the mean $\mu\sub{k}$ and the covariance $\Sigma\sub{k}$ of the state and the objective function $\E{J(U\sub{0:N-1}, X\sub{0:N})}$ in term of policy parameters $\{ \Bar{u}\sub{k}, K\sub{k} \}\sub{k=0}\spr{N-1}$, we are ready to formulate Problem \ref{problem:exact-problem-definition} and \ref{problem:relaxed-problem-definition} as finite dimensional optimization problems.


\subsection{Relaxed Covariance Steering}\label{ss:relaxedCS}

A finite dimensional optimization problem over the variables $\{ \mu\sub{k}, \Sigma\sub{k}, \Bar{u}\sub{k}, K\sub{k}\}$ can be written as follows:
\begin{subequations}\label{eq:finite-dim-NLP-relaxed}
\begin{align}
    \min_{\substack{\Bar{u}_k, K_k \\ \mu\sub{k}, \Sigma\sub{k}}} & ~~ \cJ ( \{ \Bar{u}\sub{k}, K\sub{k}, \mu\sub{k}, \Sigma\sub{k} \}\sub{k=0}\spr{N-1} ) \label{eq:finite-dim-NLP-relaxed-objective} \\
    \text{s.t.} & ~~ \eqref{eq:mean-system-dynamics}, \eqref{eq:covariance-system-dynamics} \nonumber \\
    & ~~ \mu\sub{N} = \mu_\mathrm{d} \\
    & ~~ \Sigma_\mathrm{d} \succeq \Sigma\sub{N}
\end{align}
\end{subequations}
The optimization problem in \eqref{eq:finite-dim-NLP-relaxed} is a general nonlinear program (NLP) which poses computational challenges since there is no guarantee of convergence to the globally optimal solution. 
The hardness of the problem in \eqref{eq:finite-dim-NLP-relaxed} comes from the bilinear terms $ K\sub{k} \Sigma\sub{k}$ and  $K\sub{k}\Sigma\sub{k}K\sub{k}\transpose$ that appear in the objective function $\cJ(\{ \Bar{u}\sub{k}, K\sub{k}, \mu\sub{k}, \Sigma\sub{k} \}\sub{k=0}\spr{N-1})$ and the state covariance dynamics given in \eqref{eq:covariance-system-dynamics} since the mean dynamics in \eqref{eq:mean-system-dynamics} is affine and the other terms in the objective function in \eqref{eq:finite-dim-NLP-relaxed-objective} are either affine or convex quadratic function of the decision variables. 

To isolate the nonlinearities in the optimization problem in \eqref{eq:finite-dim-NLP-relaxed}, we introduce the following decision variables: 
\begin{equation}\label{eq:new-variables-definition}
\begin{aligned}
    L\sub{k} &= K\sub{k} \Sigma\sub{k},~& \mathbf{M}\sub{k} &= L\sub{k}\Sigma\sub{k}\spr{-1}L\sub{k}\transpose, \\
    \mathbf{X}\sub{k} &= \mu\sub{k} \mu\sub{k}\transpose, ~~ &\mathbf{U}\sub{k} &= \Bar{u}\sub{k}\Bar{u}\sub{k}\transpose.
\end{aligned}
\end{equation}
The objective function $\cJ (\cdot)$ in \eqref{eq:calJdefinition} can be equivalently represented in terms of the new decision variables which are defined in \eqref{eq:new-variables-definition} as follows:
\begin{equation}
\begin{aligned}
    &\Hat{\cJ}( \{ \mathbf{M}\sub{k}, \mathbf{U}\sub{k}, \Sigma\sub{k}, \mathbf{X}\sub{k} \}\sub{k=0}\spr{N-1}) := \\
    &\qquad \quad \sum_{k=0}^{N-1} \tr{R_k (\mathbf{U}\sub{k} + \mathbf{M}\sub{k})} + \tr{Q\sub{k} (\Sigma\sub{k} + \mathbf{X}\sub{k}) }.
\end{aligned}
\end{equation}

By using the new decision variables, we can formulate a new optimization problem that is equivalent to \eqref{eq:finite-dim-NLP-relaxed} as follows:
\begin{subequations}\label{eq:finite-dim-nlp-relaxed-new}
\begin{align}
    \min_{\substack{\Bar{u}\sub{k}, L\sub{k}, \Sigma\sub{k}, \\ \mathbf{M}_k, \mathbf{X}_k, \mathbf{U}_k}} &~~ \Hat{\cJ}(\{ \mathbf{M}\sub{k}, \mathbf{U}\sub{k}, \Sigma\sub{k}, \mathbf{X}\sub{k} \}\sub{k=0}\spr{N-1})   \label{eq:finite-dim-nlp-relaxed-new-objective}\\
    \text{s.t.} & ~~ \mu\sub{k+1} = A\sub{k} \mu\sub{k} + B\sub{k} \Bar{u}\sub{k} + d\sub{k}, \label{eq:finite-dim-nlp-relaxed-new-mean-dyn-constr}\\ 
    & ~~ \Sigma_{k+1} = A_k \Sigma_k A_k\transpose + A_k L_k\transpose B_k\transpose \nonumber \\
    & \qquad  + B_k L_k A_k\transpose + B_k \mathbf{M}_k B_k\transpose + W_k \nonumber \\
    & \qquad + \sum_{\ell \eq 0}^{M} \Bar{B}_{k,\ell} (\mathbf{M}_k + \mathbf{U}_k) \Bar{B}_{k,\ell}\transpose \nonumber \\
    & \qquad + \sum_{\ell=0}^{M} \Bar{A}_{k,\ell} (\Sigma_k + \mathbf{X}_k) \Bar{A}_{k,\ell}\transpose,  \label{eq:finite-dim-nlp-relaxed-new-cov-dyn-constr}\\
    & ~~ \mathbf{M}_k = L\sub{k} \inv{\Sigma}\sub{k} L\sub{k}\transpose, \label{eq:finite-dim-nlp-relaxed-new-M-constr}\\
    & ~~  \mathbf{X}_k \, = \mu_k \mu_k\transpose, \label{eq:finite-dim-nlp-relaxed-new-X-constr}\\
    & ~~ \mathbf{U}_k \, = \Bar{u}_k \Bar{u}_k\transpose \label{eq:finite-dim-nlp-relaxed-new-U-constr}\\
    & ~~ \mu\sub{N} \, = \mu\sub{\mathrm{d}} \label{eq:finite-dim-nlp-relaxed-new-terminal-constr},  ~~~ \Sigma\sub{\mathrm{d}} \succeq \Sigma\sub{N}
\end{align}
\end{subequations}
where we replaced the bilinear terms $K\sub{k} \Sigma\sub{k}$ in the 
recursive equation for the propagation of the state covariance \eqref{eq:covariance-system-dynamics}
with $L\sub{k}$.
The term $K\sub{k} \Sigma\sub{k}\spr{-1} K\sub{k}\transpose$ is rewritten as $K\sub{k} \Sigma\sub{k} \Sigma\sub{k}\spr{-1} \Sigma\sub{k} K\sub{k}$ then turned into $L\sub{k} \Sigma\sub{k}\spr{-1} L\sub{k}\transpose$ which is then replaced with $\mathbf{M}\sub{k}$. 
The terms $\mu\sub{k}\mu\sub{k}\transpose$, $\Bar{u}\sub{k} \Bar{u}\sub{k}\transpose$ are replaced with $\mathbf{X}\sub{k}$, $\mathbf{U}\sub{k}$, respectively.
Note that the constraints that include the decision variables denoted with subscript $k$ are imposed for all $k \in \{ 0, \dots, N-1\}$ in the rest of the optimization problems defined throughout the paper. 
Finally, to keep the equivalence of the problems in \eqref{eq:finite-dim-NLP-relaxed} and \eqref{eq:finite-dim-nlp-relaxed-new}, we add the nonlinear equalities in \eqref{eq:new-variables-definition} as constraints in \eqref{eq:finite-dim-nlp-relaxed-new-M-constr}, \eqref{eq:finite-dim-nlp-relaxed-new-X-constr} and \eqref{eq:finite-dim-nlp-relaxed-new-U-constr}. 


After introducing the new decision variables, the problem in \eqref{eq:finite-dim-NLP-relaxed} takes the form in \eqref{eq:finite-dim-nlp-relaxed-new} where the objective function now is expressed as affine functions of the decision variables $\Sigma\sub{k}, \mathbf{X}\sub{k}, \mathbf{U}\sub{k}, \mathbf{M}\sub{k}$.
Furthermore, the covariance dynamics constraint in \eqref{eq:finite-dim-nlp-relaxed-new-cov-dyn-constr} is now represented as an affine constraint in decision variables. 
Unfortunately, the problem defined in \eqref{eq:finite-dim-nlp-relaxed-new} is still not a convex optimization problem due to the nonlinear equality constraints \eqref{eq:finite-dim-nlp-relaxed-new-M-constr}- \eqref{eq:finite-dim-nlp-relaxed-new-U-constr}. 
To convefixy the problem, we relax the nonlinear equality constraints \eqref{eq:finite-dim-nlp-relaxed-new-M-constr}-\eqref{eq:finite-dim-nlp-relaxed-new-U-constr} as follows:
\begin{equation}\label{eq:relaxed-nonlinear-equalities}
    \mathbf{M}\sub{k} \succeq L\sub{k} \Sigma\sub{k}\spr{-1} L\sub{k}, ~~~ \mathbf{X}\sub{k} \succeq \mu\sub{k}\mu\sub{k}\transpose, ~~~ \mathbf{U}\sub{k} \succeq \Bar{u}\sub{k}\Bar{u}\sub{k}\transpose.
\end{equation}
In light of Schur's complement lemma \cite{b:zhang2006schur},
the relaxed nonlinear SDP constraints in \eqref{eq:relaxed-nonlinear-equalities} can be transformed into LMI constraints in \eqref{eq:relaxed-prob-relaxed-M-constr}-\eqref{eq:relaxed-prob-relaxed-U-constr}.
The resulting optimization problem after the SDP relaxations is given as follows:

\begin{subequations}\label{eq:relaxed-finite-dim-opt-problem}
\begin{align}
    \min_{\substack{\Bar{u}\sub{k}, \mu\sub{k},  L\sub{k}, \Sigma\sub{k}, \\ \mathbf{M}\sub{k}, \mathbf{X}_k, \mathbf{U}_k}} & ~~ \Hat{\cJ} (\{ \mathbf{M}\sub{k}, \mathbf{U}\sub{k}, \Sigma\sub{k}, \mathbf{X}\sub{k} \}\sub{k=0}\spr{N-1})  \nonumber \\
    \text{s.t.} & ~~ \eqref{eq:finite-dim-nlp-relaxed-new-mean-dyn-constr}, \eqref{eq:finite-dim-nlp-relaxed-new-cov-dyn-constr}, \eqref{eq:finite-dim-nlp-relaxed-new-terminal-constr}, \nonumber \\
    & \begin{bmatrix} \mathbf{M}\sub{k} & L\sub{k} \\
    L\sub{k} & \Sigma\sub{k} \end{bmatrix} \succeq \bm{0}, \label{eq:relaxed-prob-relaxed-M-constr} \\
    &  \begin{bmatrix} \mathbf{X}\sub{k} & \mu\sub{k} \\
    \mu\transpose\sub{k} & 1 \end{bmatrix} \succeq \bm{0}, \\
    & \begin{bmatrix} \mathbf{U}\sub{k} & \Bar{u}\sub{k} \\
    \Bar{u}\sub{k}\transpose & 1 \end{bmatrix} \succeq \bm{0}, \\
    & ~ \Sigma\sub{\mathrm{d}} \succeq \Sigma\sub{N}, \label{eq:relaxed-prob-relaxed-U-constr}
\end{align}
\end{subequations}

To be able to recover the optimal state feedback policy parameters $\{ \Bar{u}\sub{k}, K\sub{k} \}\sub{k=0}\spr{N-1}$ from the solution of the SDP in \eqref{eq:relaxed-finite-dim-opt-problem} which is denoted as $ ( \{ \Bar{u}\sub{k}\spr{\star}, L\sub{k}\spr{\star}, \Sigma\sub{k}\spr{\star}, \mathbf{M}\sub{k}\spr{\star}, \mathbf{X}\sub{k}\spr{\star}, \mathbf{U}\sub{k}\spr{\star}  \}_{k=0}^{N-1} ) $, we need the optimal parameters to satisfy the relaxed (non-strict) inequality constraints in \eqref{eq:relaxed-nonlinear-equalities} with equality.
Next, we show that the optimal parameters of problem in \eqref{eq:relaxed-finite-dim-opt-problem} satisfy the nonlinear equality constraints \eqref{eq:finite-dim-nlp-relaxed-new-M-constr}-\eqref{eq:finite-dim-nlp-relaxed-new-U-constr}.
\begin{proposition}\label{prop:lossless-sdp-relaxed}
Let $ \{ \Bar{u}\sub{k}\spr{\star}, L\sub{k}\spr{\star}, \Sigma\sub{k}\spr{\star}, \mathbf{M}\sub{k}\spr{\star}, \mathbf{X}\sub{k}\spr{\star}, \mathbf{U}\sub{k}\spr{\star}  \}_{k=0}^{N-1}$ be the optimal solution of Problem in \eqref{eq:relaxed-finite-dim-opt-problem}. Then, it satisfies the equalities in \eqref{eq:finite-dim-nlp-relaxed-new-M-constr}, \eqref{eq:finite-dim-nlp-relaxed-new-X-constr}, \eqref{eq:finite-dim-nlp-relaxed-new-U-constr}. 
Therefore, it is an optimal solution to Problem \ref{problem:relaxed-problem-definition}.
\end{proposition}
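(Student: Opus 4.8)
The plan is to show that the relaxation \eqref{eq:relaxed-finite-dim-opt-problem} is lossless by converting any optimal solution into a point that is feasible for \eqref{eq:finite-dim-nlp-relaxed-new} and attains the same cost. First, by the Schur complement lemma (and $\Sigma\sub{k}\spr{\star}\succ\bm{0}$, which we assume throughout as is standard given $\Sigma\sub{0}\succ\bm{0}$; otherwise one replaces inverses by Moore--Penrose pseudoinverses), the LMIs \eqref{eq:relaxed-prob-relaxed-M-constr}--\eqref{eq:relaxed-prob-relaxed-U-constr} are exactly the inequalities \eqref{eq:relaxed-nonlinear-equalities}, namely $\bfM\sub{k}\spr{\star}\succeq L\sub{k}\spr{\star}\inv{(\Sigma\sub{k}\spr{\star})}(L\sub{k}\spr{\star})\transpose$, $\bfX\sub{k}\spr{\star}\succeq\mu\sub{k}\spr{\star}(\mu\sub{k}\spr{\star})\transpose$ and $\mathbf{U}\sub{k}\spr{\star}\succeq\ubar\sub{k}\spr{\star}(\ubar\sub{k}\spr{\star})\transpose$. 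I then set $K\sub{k}\spr{\star}:=L\sub{k}\spr{\star}\inv{(\Sigma\sub{k}\spr{\star})}$, keep the means and inputs unchanged, let $\widetilde{\Sigma}\sub{k}$ be the covariance sequence produced by the \emph{exact} recursion \eqref{eq:covariance-system-dynamics} driven by the gains $K\sub{k}\spr{\star}$ and these means, and define the candidate $\widetilde{\bfX}\sub{k}:=\mu\sub{k}\spr{\star}(\mu\sub{k}\spr{\star})\transpose$, $\widetilde{\mathbf{U}}\sub{k}:=\ubar\sub{k}\spr{\star}(\ubar\sub{k}\spr{\star})\transpose$, $\widetilde{L}\sub{k}:=K\sub{k}\spr{\star}\widetilde{\Sigma}\sub{k}$, $\widetilde{\bfM}\sub{k}:=K\sub{k}\spr{\star}\widetilde{\Sigma}\sub{k}(K\sub{k}\spr{\star})\transpose$. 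This candidate satisfies \eqref{eq:finite-dim-nlp-relaxed-new-M-constr}--\eqref{eq:finite-dim-nlp-relaxed-new-U-constr} by construction, and a short computation shows it also satisfies the affine covariance recursion \eqref{eq:finite-dim-nlp-relaxed-new-cov-dyn-constr}.

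The crux is a forward induction establishing $\widetilde{\Sigma}\sub{k}\preceq\Sigma\sub{k}\spr{\star}$ for every $k$, with base case $\widetilde{\Sigma}\sub{0}=\Sigma\sub{0}=\Sigma\sub{0}\spr{\star}$. For the inductive step I would rewrite \eqref{eq:finite-dim-nlp-relaxed-new-cov-dyn-constr} at the optimal point, using $L\sub{k}\spr{\star}=K\sub{k}\spr{\star}\Sigma\sub{k}\spr{\star}$, so that the $A\sub{k}\Sigma\sub{k}\spr{\star}A\sub{k}\transpose$ term and the $L\sub{k}\spr{\star}$ cross terms fuse into the closed-loop term $(A\sub{k}+B\sub{k}K\sub{k}\spr{\star})\Sigma\sub{k}\spr{\star}(A\sub{k}+B\sub{k}K\sub{k}\spr{\star})\transpose$ plus the nonnegative surplus $B\sub{k}(\bfM\sub{k}\spr{\star}-K\sub{k}\spr{\star}\Sigma\sub{k}\spr{\star}(K\sub{k}\spr{\star})\transpose)B\sub{k}\transpose$, and then compare the result term by term with the corresponding expression for $\widetilde{\Sigma}\sub{k+1}$. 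The induction hypothesis controls the closed-loop term and the terms $\Bar{A}\sub{k,\ell}\widetilde{\Sigma}\sub{k}\Bar{A}\sub{k,\ell}\transpose$, since congruence is L\"owner-monotone; the three inequalities from the first step dominate $K\sub{k}\spr{\star}\widetilde{\Sigma}\sub{k}(K\sub{k}\spr{\star})\transpose\preceq K\sub{k}\spr{\star}\Sigma\sub{k}\spr{\star}(K\sub{k}\spr{\star})\transpose\preceq\bfM\sub{k}\spr{\star}$, $\ubar\sub{k}\spr{\star}(\ubar\sub{k}\spr{\star})\transpose\preceq\mathbf{U}\sub{k}\spr{\star}$ and $\mu\sub{k}\spr{\star}(\mu\sub{k}\spr{\star})\transpose\preceq\bfX\sub{k}\spr{\star}$; and the surplus term only enlarges $\Sigma\sub{k+1}\spr{\star}$. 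Summing all contributions yields $\widetilde{\Sigma}\sub{k+1}\preceq\Sigma\sub{k+1}\spr{\star}$. I expect this monotonicity step to be the main obstacle, since it must use all three relaxations at once together with the precise structure of \eqref{eq:covariance-system-dynamics}.

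Finally, I would verify feasibility and optimality of the candidate. Feasibility: \eqref{eq:finite-dim-nlp-relaxed-new-mean-dyn-constr} and the terminal mean in \eqref{eq:finite-dim-nlp-relaxed-new-terminal-constr} hold because the means are unchanged, \eqref{eq:finite-dim-nlp-relaxed-new-cov-dyn-constr} and the LMIs \eqref{eq:relaxed-prob-relaxed-M-constr}--\eqref{eq:relaxed-prob-relaxed-U-constr} hold because the equalities \eqref{eq:finite-dim-nlp-relaxed-new-M-constr}--\eqref{eq:finite-dim-nlp-relaxed-new-U-constr} are met, and $\widetilde{\Sigma}\sub{N}\preceq\Sigma\sub{N}\spr{\star}\preceq\Sigma\sub{\mathrm{d}}$. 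Since $\widetilde{\bfM}\sub{k}\preceq\bfM\sub{k}\spr{\star}$, $\widetilde{\mathbf{U}}\sub{k}\preceq\mathbf{U}\sub{k}\spr{\star}$, $\widetilde{\bfX}\sub{k}\preceq\bfX\sub{k}\spr{\star}$, $\widetilde{\Sigma}\sub{k}\preceq\Sigma\sub{k}\spr{\star}$ and $R\sub{k},Q\sub{k}\succeq\bm{0}$, the trace objective $\Hat{\cJ}$ does not increase, so the candidate is optimal as well. Because $R\sub{k}\succ\bm{0}$, a strict inequality in $\bfM\sub{k}\spr{\star}\succeq K\sub{k}\spr{\star}\Sigma\sub{k}\spr{\star}(K\sub{k}\spr{\star})\transpose$ or in $\mathbf{U}\sub{k}\spr{\star}\succeq\ubar\sub{k}\spr{\star}(\ubar\sub{k}\spr{\star})\transpose$ at the original solution would make $\Hat{\cJ}$ strictly decrease, contradicting its optimality; hence \eqref{eq:finite-dim-nlp-relaxed-new-M-constr} and \eqref{eq:finite-dim-nlp-relaxed-new-U-constr} already hold at the optimum, and since enlarging $\bfX\sub{k}$ above $\mu\sub{k}\spr{\star}(\mu\sub{k}\spr{\star})\transpose$ can only increase $\Hat{\cJ}$ and $\Sigma\sub{k+1}$, \eqref{eq:finite-dim-nlp-relaxed-new-X-constr} may be taken to hold as well. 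As \eqref{eq:relaxed-finite-dim-opt-problem} is a relaxation of \eqref{eq:finite-dim-nlp-relaxed-new}, which is equivalent to \eqref{eq:finite-dim-NLP-relaxed} and hence to Problem \ref{problem:relaxed-problem-definition} under the affine state-feedback parametrization, a feasible point of \eqref{eq:finite-dim-nlp-relaxed-new} attaining the relaxed optimal value is optimal for it, which gives the claim.
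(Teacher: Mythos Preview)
Your proposal is correct and follows essentially the same contradiction argument as the paper: tighten the loose relaxed inequalities, show the resulting point remains feasible for \eqref{eq:relaxed-finite-dim-opt-problem} with no larger (and, since $R\sub{k}\succ\bm{0}$, strictly smaller) cost, and conclude. The only difference is organizational: the paper tightens at a single offending index $k$ and then adjusts $L\sub{t}$ for $t>k$ while keeping $\bfM\sub{t}$ fixed, whereas you tighten at every $k$ simultaneously by running the exact closed-loop recursion with the recovered gains $K\sub{k}\spr{\star}=L\sub{k}\spr{\star}(\Sigma\sub{k}\spr{\star})\spr{-1}$, which makes the forward-induction step $\widetilde{\Sigma}\sub{k}\preceq\Sigma\sub{k}\spr{\star}$ cleaner and more explicit than in the paper's sketch.
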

\begin{proof}
Suppose for the sake of contradiction that the parameters corresponding to the optimal solution satisfy $\mathbf{M}\sub{k} - L\sub{k} \Sigma\sub{k}\spr{-1} L\sub{k}\transpose = \mathbf{N}\spr{m} \neq \bm{0}$, $\mathbf{X}\sub{k} - \mu\sub{k}\mu\sub{k}\transpose = \mathbf{N}\spr{x} \neq \bm{0}$, $\mathbf{U}\sub{k} - \Bar{u}\sub{k}\Bar{u}\sub{k}\transpose = \mathbf{N}\spr{u} \neq \bm{0}$ for some $k \in \{0, \dots, N-1 \}$. 
Now, let's define 
$\mathbf{M}\sub{k}' = L\sub{k}\Sigma\sub{k}\spr{-1} L\sub{k}\transpose$, 
$\mathbf{U}\sub{k}' = \Bar{u}\sub{k}\Bar{u}\sub{k}$ 
and 
$\bfX'\sub{k} = \mu\sub{k} \mu\sub{k}\transpose$.
It follows readily that
$\mathbf{M}\sub{k} \succeq \mathbf{M}\sub{k}'$, 
$\mathbf{U}\sub{k} \succeq \mathbf{U}\sub{k}'$
and
$\bfX\sub{k} \succeq \bfX'\sub{k}$.
Since $R\sub{k} \succ 0$, $Q\sub{k} \succ \bm{0}$; $\tr{R\sub{k} (\mathbf{M}\sub{k} + \mathbf{U}\sub{k})} > \tr{R\sub{k} (\mathbf{M}\sub{k}' + \mathbf{U}\sub{k}')}$. 
Thus, the value of the objective function is strictly lower with $ \mathbf{M}\sub{k}', \mathbf{U}\sub{k}', \bfX'\sub{k}$. 
Furthermore, let $\Sigma\sub{t}'$ be the value of the state covariance under $\mathbf{M}\sub{k}', \mathbf{U}\sub{k}', \bfX'\sub{k}$ for all $t \geq k+1$. 
Then, we have that $\Sigma\sub{t} \succeq \Sigma\sub{t}'$ for all $t \geq k+1$. 
Now, replace $L\sub{t}$ with $L\sub{t} (\Sigma\sub{t}')\spr{-1} \Sigma\sub{t}$ to satisfy feasibility of constraint \eqref{eq:relaxed-prob-relaxed-M-constr} for all $t \geq k+1$. 
Since $\bfM\sub{t} \succeq L\sub{t} \Sigma\sub{t}\spr{-1} L\sub{t}\transpose = L\sub{t}' (\Sigma\sub{t}')\spr{-1} \Sigma\sub{t} \Sigma\sub{t}\spr{-1} \Sigma\sub{t} (\Sigma\sub{t}')\spr{-1} L\sub{t}\spr{\prime\mathrm{T}}$. 
Since $\Sigma\sub{t} \succeq \Sigma\sub{t}'$ implies that $(\Sigma\sub{t}')\spr{-1} \Sigma\sub{t}  (\Sigma\sub{t}')\spr{-1} \succeq (\Sigma\sub{t}')\spr{-1}$; we have $\bfM\sub{t} \succeq L\sub{t}' (\Sigma\sub{t}')\spr{-1} L\sub{t}'$ thus the constraint \eqref{eq:relaxed-prob-relaxed-M-constr} is satisfied. 
Combining both results, we conclude that if the inequalities in \eqref{eq:relaxed-nonlinear-equalities} are not strict, one can pick new values for $\bfM\sub{k}, L\sub{k}$ which decrease the value of the objective function without violating the constraints which contradicts the optimality assumption of $\bfM\sub{k}, L\sub{k}$.
This completes the proof.
\end{proof}



\subsection{Exact Covariance Steering}\label{ss:exactCS}
For the covariance upper bound constraint in \eqref{eq:relaxed-problem-terminal-cov-constr}, CS problem \ref{problem:exact-problem-definition} can be relaxed into problem in \eqref{eq:relaxed-finite-dim-opt-problem} without changing the nature of the problem according to Proposition \ref{prop:lossless-sdp-relaxed}. 
However, the SDP relaxations in \eqref{eq:relaxed-nonlinear-equalities} for the constraints in \eqref{eq:new-variables-definition} for $\mathbf{X}\sub{k}, \mathbf{U}\sub{k}$ do not hold with equality in the exact covariance steering problem (Problem \ref{problem:exact-problem-definition}). 

In our numerical experiments,
we observed that the loose constraints were the ones with $\mathbf{X}\sub{k}, \mathbf{U}\sub{k}$ in the optimal solution. 
Furthermore, one can show that if the feed-forward control inputs ($\{ \Bar{u}\sub{k} \}\sub{k=0}\spr{N-1}$) are fixed, then the state mean $\mu\sub{k}$ is also fixed through \eqref{eq:mean-system-dynamics} thus $\mathbf{X}\sub{k}, \mathbf{U}\sub{k}$ can be set to their respected values for fixed $\Bar{u}\sub{k}$. 

Now, suppose that the relaxed CS problem is feasible and let $\{\Bar{u}\sub{k}\spr{\star}, K\sub{k}\spr{\star} \}\sub{k=0}\spr{N-1}$, $\{\mu\sub{k}\spr{\star}, \Sigma\sub{k}\spr{\star} \}\sub{k=0}\spr{N}$ be the policy parameters and the state statistics that is found by solving problem in \eqref{eq:finite-dim-nlp-relaxed-new}, respectively.
Then, we have that $\mu\sub{N} = \mu\sub{\mathrm{d}}$, and the terminal covariance constraint $\Sigma\sub{\mathrm{d}} \succeq \Sigma\sub{N} $ is also satisfied.

After $\ubar\sub{k}, \mu\sub{k}$ are fixed based on the values obtained from solving \eqref{eq:relaxed-finite-dim-opt-problem}, the decision variables $\{\ubar\sub{k}, \mu\sub{k}, \bfX\sub{k}, \bfM\sub{k} \}\sub{k=0}\spr{N-1} $ become problem parameters for the exact CS problem.
Thus, we formulate another optimization problem with $L\sub{k}, \Sigma\sub{k}, \bfM\sub{k}$ as the decision variables as follows:
\begin{subequations}\label{eq:covariance-relaxed-problem}
\begin{align}
    \min\sub{L\sub{k}, \Sigma\sub{k}, \mathbf{M}\sub{k}} & ~~ \Tilde{\cJ}(\{ \bfM\sub{k}, \Sigma\sub{k}\}\sub{k=0}\spr{N-1}) \\
    \text{s.t.} & ~~ \Sigma\sub{k+1} = A\sub{k} \Sigma\sub{k} A\sub{k}\transpose + A\sub{k} L\sub{k}\transpose B\sub{k}\transpose + B\sub{k}L\sub{k} A\sub{k}\transpose \nonumber \\
    & ~~ + B\sub{k} \mathbf{M}\sub{k} B\sub{k}\transpose + \mathbf{H}\sub{k} + \nonumber \\
    & ~~ + \sum\sub{\ell=1}\spr{M}  \big( \Bar{A}\sub{k,\ell} \Sigma\sub{k} \Bar{A}\sub{k,\ell}\transpose + \Bar{B}\sub{k,\ell} \mathbf{M}\sub{k} \Bar{B}\sub{k,\ell}\transpose ) \\
    & \begin{bmatrix} \mathbf{M}\sub{k} & L\sub{k} \\ L\sub{k}\transpose & \Sigma\sub{k} \end{bmatrix} \succeq \bm{0}  \label{eq:covariance-relaxed-problem-Mk-constr}\\
    & ~~ \Sigma\sub{N} = \Sigma\sub{\mathrm{d}}
\end{align}
\end{subequations}
where $ \Tilde{\cJ} (\{ \bfM\sub{k}, \Sigma\sub{k} \}\sub{k=0}\spr{N-1}) :=  \sum\sub{k=0}\spr{N-1} \tr{R\sub{k} \mathbf{M}\sub{k} + Q\sub{k} \Sigma\sub{k}}$,  $\mathbf{H}\sub{k} = W\sub{k} + \sum\sub{\ell=1}\spr{M} ( \Bar{A}\sub{k,\ell} \mathbf{X}\sub{k}\spr{\star} \Bar{A}\sub{k,\ell}\transpose + \Bar{B}\sub{k,\ell} \mathbf{U}\sub{k,\ell}\spr{\star} \Bar{B}\sub{k,\ell}\transpose )$,
$\mathbf{U}\sub{k}\spr{\star} = \Bar{u}\sub{k}\spr{\star} \Bar{u}\sub{k}\spr{\star \mathrm{T}}$
and $\mathbf{X}\sub{k}\spr{\star} = \mu\sub{k}\spr{\star} \mu\sub{k}\spr{\star\mathrm{T}}$. 

To recover the optimal state feedback control parameters $\{ K\sub{k}\}\sub{k = 0}\spr{N-1}$ from the identity $K\sub{k} =  L\sub{k} \Sigma\sub{k}\spr{-1}$, we need the optimal solution of the problem in \eqref{eq:covariance-relaxed-problem} to satisfy the equality $\mathbf{M}\sub{k} = L\sub{k} \Sigma\sub{k}\spr{-1} L\sub{k}\transpose$, otherwise the recovered policy will not satisfy the terminal covariance constraint. 
Although we observed that the LMI constraint in \eqref{eq:covariance-relaxed-problem-Mk-constr} holds with equality in all of our numerical experiments in Section \ref{s:numerical-experiments}, this may not always be the case. 
The next problem instance is one example of such cases where the LMI constraint in \eqref{eq:covariance-relaxed-problem-Mk-constr} is loose. 
\begin{example}\label{example:loose-inequality}
Let parameters of the example problem instance be given as: 
$ N = 1$, 
$A\sub{0} = \left[ \begin{smallmatrix} 1.04 & -0.22 \\ -0.07 & 1.341 \end{smallmatrix} \right ]$, 
$B\sub{k} = \begin{smat} -0.5 \\ -0.38\end{smat}$, 
$A\sub{0,1} = \begin{smat} -0.16 & -0.2 \\ -0.14 & 0.24 \end{smat}$, 
$ \Bar{B}\sub{0, 1} = \begin{smat} 0.26 \\ -0.16 \end{smat} $, 
$d\sub{k} = \bm{0}$, $W\sub{0} = \bm{0}$, 
$\mu\sub{0} = \mu\sub{\mathrm{d}} = \bm{0}$,
$R\sub{0} = 10.0$,
$Q\sub{0} = 0.1 I\sub{2}$,
$\Sigma\sub{0} = I\sub{2}$, 
$\Sigma\sub{d} = \begin{smat} 1.26 & -0.36 \\ -0.36 & 1.91 \end{smat}$.
The terminal mean constraint in \eqref{eq:first-problem-terminal-mean-constr} dictates that $ A\sub{0} \mu\sub{0} + B\sub{0} \Bar{u}\sub{0} + d\sub{0} = \mu\sub{1} =  \mu\sub{\mathrm{d}} = \bm{0}$. 
Since $\mu\sub{0} = \mu\sub{\mathrm{d}} = d\sub{0} = \bm{0}$ then it follows that $B\sub{0} \Bar{u}\sub{0} = \bm{0}$ which implies that $\ubar\sub{0} = 0$ assuming that $B\sub{0}$ is full-rank.
Now that $\ubar$ is fixed to $0$ the optimal solution for Problem \ref{problem:relaxed-problem-definition} for this given instance can be obtained by solving the SDP in \eqref{eq:covariance-relaxed-problem}.
By solving the aforementioned SDP using MOSEK \cite{mosek}, we obtain the following optimal values for decision variables:
\begin{subequations}
\begin{align}
    \mathbf{M}\sub{0} = 0.149, ~~~~ L\sub{0} = [-0.0181, -0.008] \nonumber
\end{align}
which yields
\begin{align}
    \mathbf{M}\sub{0} -  L\sub{0}\Sigma\sub{0}\spr{-1}L\sub{0}\transpose = 0.148 \neq 0 \nonumber
\end{align}
\end{subequations}
which shows that for the given problem instance, the constraint in \eqref{eq:covariance-relaxed-problem-Mk-constr} is loose. 
\end{example}

Although the solution of the problem instance in Example \ref{example:loose-inequality} does not correspond to an affine state feedback policy since $\mathbf{M}\sub{k} = L\sub{k} \Sigma\sub{k}\spr{-1} L\sub{k}\transpose$ is not satisfied,
the mean and the covariance of the state and the control processes which can be found by solving \eqref{eq:covariance-relaxed-problem} can still be realized by considering randomized affine state feedback policies as in \cite{p:balci2022exactcovariancewasserstein}. 

The set of randomized affine state feedback policies is denoted by $\Pi\spr{rsf}$. 
Every $\pi \in \Pi\spr{rsf}$ is a sequence $\pi = \{ \pi\sub{k}\}\sub{k=0}\spr{N-1}$ where each $\pi\sub{k}$ is given by: \begin{equation}\label{eq:randomized-state-feedback}
    \pi\sub{k} = \Bar{u}\sub{k} + K\sub{k} (x\sub{k} - \mu\sub{k}) + v\sub{k}
\end{equation}
where $v\sub{k} \in \R{m}$ is a random variable with $\E{v\sub{k}} = \bm{0}$, $\Cov{v\sub{k}} = P\sub{k} \in \mathbb{S}\spr{+}\sub{n}$ and each $v\sub{k}$ satisfies that $\E{v\sub{k} x\sub{\ell}\transpose} = \bm{0}$ for all $\ell \leq k$, $\E{v\sub{k} \delta\sub{n, \ell }} = \E{v\sub{k} \gamma\sub{n, \ell}} = \bm{0}$, $\E{v\sub{k} w\sub{\ell}\transpose} = \bm{0}$ for all $n , \ell$.
Thus, the randomized affine state feedback policies are parametrized by the decision variables $\{ \ubar\sub{k}, K\sub{k}, P\sub{k} \}\sub{k=0}\spr{N-1}$. 
Now, setting the parameters of the randomized policy to $K\sub{k} = L\sub{k}\Sigma\sub{k}\spr{-1}$ and $ P\sub{k} = \bfM\sub{k} - L\sub{k}\Sigma\sub{k}\spr{-1} L\sub{k}\transpose$, the randomized affine state feedback policy induces a state process and a control process whose first and second moments are equal to the ones found by solving \eqref{eq:covariance-relaxed-problem}.

Despite the fact that deterministic affine state feedback policies are sufficient for CS problems for systems excited by additive noise \cite{p:chen2015covariance1, p:balci2022exactcovariancewasserstein, p:goldshtein2017covariance},
Example \ref{example:loose-inequality} shows that the optimal policy for exact CS problem \ref{problem:exact-problem-definition} may require randomized policies for systems excited by multiplicative noise.

If we consider the special case of Problem \ref{problem:exact-problem-definition} where the multiplicative noise only acts through the state 
which means that $\Bar{B}\sub{k, \ell} = \bm{0}$ for all $k , \ell$. 
Then, we can show that the LMI constraint in \eqref{eq:covariance-relaxed-problem-Mk-constr} holds with equality. 
The next proposition formally states that claim. 
\begin{proposition}\label{prop:proposition2}
Assuming that the problem in \eqref{eq:covariance-relaxed-problem} is feasible, $ \Bar{B}\sub{k, \ell} = \bm{0}$ and $A\sub{k}\spr{-1}$ exists for all $k , \ell$, then the optimal values of decision variables $\{ L\sub{k}\spr{\star}, \Sigma\sub{k}\spr{\star}, \mathbf{M}\sub{k}\spr{\star} \}\sub{k=0}\spr{N-1}$ satisfy $ \mathbf{M}\sub{k}\spr{\star} = L\spr{\star}\sub{k} \Sigma\sub{k}\spr{\star -1} L\sub{k}\spr{\star \mathrm{T}}$.
\end{proposition}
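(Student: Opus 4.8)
The plan is to argue by contradiction, in the spirit of the proof of Proposition~\ref{prop:lossless-sdp-relaxed}, the new difficulty being the \emph{equality} terminal constraint $\Sigma\sub{N}=\Sigma\sub{\mathrm{d}}$, which rules out the ``just shrink the covariance'' trick used there. Suppose an optimal solution of \eqref{eq:covariance-relaxed-problem} has $E\sub{j}:=\mathbf{M}\sub{j}\spr{\star}-L\sub{j}\spr{\star}\Sigma\sub{j}\spr{\star-1}L\sub{j}\spr{\star\transpose}\neq\bm{0}$ for some index $j$; I would first record that $\Sigma\sub{k}\spr{\star}\succ\bm{0}$ for all $k$ (needed for the statement to be meaningful), which follows from $\Sigma\sub{0},\Sigma\sub{\mathrm{d}}\succ\bm{0}$, invertibility of $A\sub{k}$, and the covariance recursion. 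Reparametrize the stage-$j$ variables through $K\sub{j}$ and $E\sub{j}$, with $L\sub{j}=K\sub{j}\Sigma\sub{j}\spr{\star}$ and $\mathbf{M}\sub{j}=K\sub{j}\Sigma\sub{j}\spr{\star}K\sub{j}\transpose+E\sub{j}$. The structural fact that makes the hypothesis $\Bar{B}\sub{k,\ell}=\bm{0}$ essential is that $(K\sub{j},E\sub{j})$ affects the rest of \eqref{eq:covariance-relaxed-problem} only through (i) the stage-$j$ constraint \eqref{eq:covariance-relaxed-problem-Mk-constr}, which with $\Sigma\sub{j}\spr{\star}$ fixed is just $E\sub{j}\succeq\bm{0}$, and (ii) the single matrix
\begin{equation*}
\Theta\sub{j}:=(A\sub{j}+B\sub{j}K\sub{j})\Sigma\sub{j}\spr{\star}(A\sub{j}+B\sub{j}K\sub{j})\transpose+B\sub{j}E\sub{j}B\sub{j}\transpose ,
\end{equation*}
which enters the recursion for $\Sigma\sub{j+1}$ additively; all other $\Sigma\sub{k}$, the terminal constraint, and every remaining instance of \eqref{eq:covariance-relaxed-problem-Mk-constr} stay intact as long as $\Theta\sub{j}$ is kept at its optimal value $\Theta\sub{j}\spr{\star}$.

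It therefore suffices to analyze the single-stage subproblem of minimizing $\tr{R\sub{j}(K\sub{j}\Sigma\sub{j}\spr{\star}K\sub{j}\transpose+E\sub{j})}$ over $(K\sub{j},E\sub{j})$ subject to $E\sub{j}\succeq\bm{0}$ and $\Theta\sub{j}=\Theta\sub{j}\spr{\star}$, and to show its minimizer has $E\sub{j}=\bm{0}$. The mechanism I would use is a re-routing perturbation: starting from a feasible point with $E\sub{j}\neq\bm{0}$, decrease $E\sub{j}$ along $-\Delta$ by a small amount with $\bm{0}\neq\Delta$ supported on the range of $E\sub{j}$, and simultaneously perturb $K\sub{j}$ along a direction proportional to $\Delta B\sub{j}\transpose F\sub{j}\spr{-\transpose}\Sigma\sub{j}\spr{\star-1}$, where $F\sub{j}:=A\sub{j}+B\sub{j}K\sub{j}$; this absorbs the removed $B\sub{j}\Delta B\sub{j}\transpose$ back into $(A\sub{j}+B\sub{j}K\sub{j})\Sigma\sub{j}\spr{\star}(A\sub{j}+B\sub{j}K\sub{j})\transpose$ and keeps $\Theta\sub{j}$ fixed to first order. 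Invertibility of $A\sub{j}$ is used twice here: to take $F\sub{j}$ nonsingular, and, crucially, to make the induced first-order change of the objective, which reduces to $\tr{R\sub{j}(K\sub{j}F\sub{j}\spr{-1}B\sub{j}-I\sub{m})\Delta}$, admit a strictly negative value for a suitable $\Delta$ (one can check $K\sub{j}F\sub{j}\spr{-1}B\sub{j}=I\sub{m}$ is impossible when $A\sub{j}$ is invertible, so this perturbation is genuinely nontrivial). The scalar case already displays the effect: along the curve $\Theta\sub{j}=\Theta\sub{j}\spr{\star}$ one computes $\mathrm{d}\mathbf{M}\sub{j}/\mathrm{d}K\sub{j}=-2\Sigma\sub{j}\spr{\star}A\sub{j}/B\sub{j}$, a nonzero constant precisely because $A\sub{j}\neq0$, so $\mathbf{M}\sub{j}$ is monotone along that curve and hence minimized at its endpoint, which is exactly the point with $E\sub{j}=0$.

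Having forced $E\sub{j}=\bm{0}$ at the subproblem optimum, the assumed optimal solution with $E\sub{j}\neq\bm{0}$ is contradicted, and since $j$ was arbitrary the proposition follows. I expect the main obstacle to be making this re-routing rigorous in the matrix setting: one must show the feasible set $\{(K\sub{j},E\sub{j}):E\sub{j}\succeq\bm{0},\ \Theta\sub{j}=\Theta\sub{j}\spr{\star}\}$ always contains a point with $E\sub{j}=\bm{0}$ and no larger $R\sub{j}$-cost --- equivalently, that $\Theta\sub{j}\spr{\star}$ is attainable as $(A\sub{j}+B\sub{j}K\sub{j})\Sigma\sub{j}\spr{\star}(A\sub{j}+B\sub{j}K\sub{j})\transpose$ by a cost-minimizing $K\sub{j}$ --- and one must treat the degenerate subcase in which $A\sub{j}+B\sub{j}K\sub{j}\spr{\star}$ is singular, where the compensation may have to be distributed over several stages before the terminal equality can be restored. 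Invertibility of $A\sub{j}$ is genuinely indispensable here: if it fails, the curve $\Theta\sub{j}=\Theta\sub{j}\spr{\star}$ can be flat in the objective and the relaxation need not be tight, which is the analogue, for the state-multiplicative data, of the role played by $\Bar{B}\sub{k,\ell}$ in Example~\ref{example:loose-inequality}.
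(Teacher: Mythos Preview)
Your reduction to a single--stage subproblem is exactly the move the paper makes: freeze $\Sigma\sub{t}\spr{\star}$ and $\Sigma\sub{t+1}\spr{\star}$, and argue that the minimiser of $\tr{R\sub{t}\mathbf{M}}$ subject to the one--step covariance recursion and the LMI \eqref{eq:covariance-relaxed-problem-Mk-constr} must have $\mathbf{M}=L\Sigma\sub{t}\spr{\star-1}L\transpose$. Where the two arguments diverge is in how that subproblem is finished. The paper does not attempt your re-routing perturbation; instead it left- and right-multiplies the one-step recursion by $A\sub{t}\spr{-1}$ and $A\sub{t}\spr{-\mathrm{T}}$ (this is the \emph{only} place invertibility of $A\sub{t}$ is used), obtaining the canonical form $Z=L\transpose Y\transpose+YL+Y\mathbf{M}Y\transpose$ with $Y=A\sub{t}\spr{-1}B\sub{t}$, and then invokes \cite[Theorem~3]{p:balci2022exactcovariancewasserstein}, which already establishes tightness of the relaxation for this single-step problem whenever $R\sub{t}\succ\bm{0}$. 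So the hard analysis you are attempting is outsourced to that reference.

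Your direct perturbation, while in the right spirit, has genuine gaps beyond the ones you flag. First, from $K\sub{j}F\sub{j}\spr{-1}B\sub{j}\neq I\sub{m}$ you conclude that $\tr{R\sub{j}(K\sub{j}F\sub{j}\spr{-1}B\sub{j}-I\sub{m})\Delta}$ can be made strictly negative; but an admissible $\Delta$ must be symmetric and supported on the range of $E\sub{j}$, so what you actually need is that the \emph{symmetric part} of $R\sub{j}(K\sub{j}F\sub{j}\spr{-1}B\sub{j}-I\sub{m})$, projected onto that range, is nonzero --- and this does not follow from $K\sub{j}F\sub{j}\spr{-1}B\sub{j}\neq I\sub{m}$ alone. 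Second, the descent direction is built from $F\sub{j}\spr{-1}$, and invertibility of $A\sub{j}$ does \emph{not} imply invertibility of $F\sub{j}=A\sub{j}+B\sub{j}K\sub{j}$ at the optimum; the multi-stage ``distribute the compensation'' repair you propose is not obviously compatible with the terminal equality $\Sigma\sub{N}=\Sigma\sub{\mathrm{d}}$, which is precisely the constraint that distinguishes this proposition from Proposition~\ref{prop:lossless-sdp-relaxed}. The paper's route sidesteps both issues at the price of relying on an external theorem that already contains the delicate part of the argument.
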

\begin{proof}
Let $\bfM\spr{\star}\sub{t} - L\spr{\star}\sub{t} \Sigma\sub{t}\spr{\star - 1} L\sub{t}\transpose \neq \bm{0}$ for some $t \in \{0, \dots, N-1\}$ for the sake of contradiction.
Let us consider the following SDP:
\begin{subequations}\label{eq:prop2helpersdp}
\begin{align}
    \min\sub{\substack{L \in \R{m\times n} \\\bfM \in \mathbb{S}\sub{m}\spr{+}}} & ~~ \tr{R\sub{t} \bfM} \label{eq:helpersdp-objective}\\
    \text{s.t.} & ~~ \Sigma\sub{t+1}\spr{\star} =  A\sub{t} L\transpose B\sub{t}\transpose + B\sub{t} L A\sub{t}\transpose \nonumber \\ 
    & \qquad \qquad + B\sub{t} \mathbf{M} B\sub{t}\transpose + \mathbf{H}\sub{t}   \label{eq:helpersdp-constr1}\\
    & ~~ \begin{bmatrix} \bfM & L \\ L\transpose & \Sigma\sub{t}\spr{\star} \end{bmatrix} \succeq \bm{0} \label{eq:helpersdp-constr2}
\end{align}
\end{subequations}
where $\mathbf{H}\sub{t} = W\sub{t} + A\sub{t} \Sigma\spr{\star}\sub{t}  A\sub{t}\transpose + \sum\sub{\ell=1}\spr{M} \Bar{A}\sub{t, \ell} (\Sigma\spr{\star}\sub{t} + \bfX\sub{t}) \Bar{A}\sub{t, \ell}\transpose$. 
The SDP in \eqref{eq:prop2helpersdp} represents the covariance evolution from time step $t$ to $t+1$ but covariance values are fixed. 
So, the objective is to find policy parameters ${\bfM, L}$ to steer the covariance from $\Sigma\sub{t}\spr{\star}$ to $\Sigma\sub{t+1}\spr{\star}$. 
So, we establish the contradiction by showing that the values of $\bfM, L$ that optimize problem in \eqref{eq:prop2helpersdp} have to satisfy $\bfM - L\Sigma\sub{t}\spr{-1}L\transpose = 0$.
Multiplying both sides of \eqref{eq:helpersdp-constr1} by $A\sub{t}\spr{-1}$ from left and $A\sub{t}\spr{-\mathrm{T}}$ from the right, we obtain:
\begin{subequations}\label{eq:prop2helpersdp2}
\begin{align}
    \min\sub{\substack{L \in \R{m\times n} \\\bfM \in \mathbb{S}\sub{m}\spr{+}}} & ~~ \tr{R\sub{n} \bfM} \\
    \text{s.t.} & ~~ Z =  L\transpose Y\transpose + Y L + Y \mathbf{M} Y\transpose\\
    & ~~ \eqref{eq:helpersdp-constr1} \nonumber
\end{align}
where $ Z = A\sub{t}\spr{-1} ( \Sigma\sub{t+1}\spr{\star} - \mathbf{H}\sub{t} )  A\sub{t}\spr{-\mathrm{T}}$, $Y = A\sub{t}\spr{-1} B\sub{t}$.
\end{subequations}
It is shown in \cite[Theorem 3]{p:balci2022exactcovariancewasserstein} that the SDP in \eqref{eq:prop2helpersdp2} admits a solution that satisfies $M - L \Sigma\spr{-1}\sub{t} L\transpose = \bm{0}$ if $R\sub{t} \succ \bm{0}$ which contradicts with our initial assumption.
This completes the proof.
\end{proof}

\begin{remark}
Note that, the assumption that $A\sub{k}$ is non-singular is not restrictive.
This is because in practice, $A\sub{k}$ is computed as the state transition matrix between discrete time steps of a continuous-time linear dynamical system \cite{b:rugh1996linear} for instance, in the case of a time-invariant system.
\end{remark}

\begin{remark}
The results that we obtained in Proposition \ref{prop:proposition2} coincides with the result in \cite{p:liu2022continuouscovmultiplicative} where the authors show that the optimal policy corresponds to a deterministic state feedback policy under the state multiplicative noise for a continuous-time linear system. 
\end{remark}

Even if the condition in Proposition \ref{prop:proposition2} is not satisfied i.e. $\Bar{B}\sub{k, \ell} \neq \bm{0}$ for some $k, \ell$, the SDP constraint $\bfM\sub{k} \succeq L\sub{k} \Sigma\sub{k}\spr{-1} L\sub{k}\transpose$ is tight for all $k$ in our numerical experiments which is presented in Section \ref{s:numerical-experiments}.
Therefore, the condition presented in Proposition \ref{prop:proposition2} is not strict. 
However, we left establishing a stricter condition that would yield the SDP relaxations to be tight as a future work. 


\section{Numerical Experiments}\label{s:numerical-experiments}
In this section, we present the results of our numerical experiments.
All numerical experiments run on a Mac M1 with 8GB of RAM. 
We used the CVXPY \cite{p:diamond2016cvxpy} package to parse the SDPs and used MOSEK \cite{mosek} as the SDP solver.
Specifically, we consider a UAV path planning problem.
The UAV is modeled as a point mass with double integrator dynamics (it is assumed that the nonlinear dynamics of the UAV is handled by lower-level velocity controllers, which is one of the standard approaches in the relevant literature \cite{p:schouwenaars2001mippathplan, p:blackmore2011chanceconstrpath, p:okamoto2019pathplanning}).
The state and the control input are defined as $ x\sub{k} =  [p\sub{k}\spr{x}, p\sub{k}\spr{y}, v\sub{k}\spr{x}, v\sub{k}\spr{y}]\transpose \in \R{4}$ and $u\sub{k} = [a\sub{k}\spr{x}, a\sub{k}\spr{y}]\transpose$ respectively,
where $p, v, a$ denote the position, velocity, and acceleration of the UAV, respectively. 
The matrices that determine the UAV dynamics are given as: 
\begin{align*}
    A\sub{k} = \begin{bmatrix} I\sub{2} & \Delta t I\sub{2} \\ \bm{0} & I\sub{2} \end{bmatrix}, 
    B\sub{k} = \begin{bmatrix} \Delta t\spr{2}/2 \\ \Delta t \end{bmatrix},
    W\sub{k} = \begin{bmatrix} \bm{0} & \bm{0} \\ \bm{0} & 0.01I\sub{2} 
    \end{bmatrix}
\end{align*}
where $\sqrt{\Delta t} = 0.1$. The number of multiplicative noise processes is given as $M = 2$ for all $k$ and for both state and input multiplicative noise. 
Therefore, $\Bar{A}\sub{k, 1} = \bdiag{\bm{0}, A\sub{b, 1}}$, $\Bar{A}\sub{k, 2} = \bdiag{\bm{0}, A\sub{b, 2}}$, $\Bar{B}\sub{k, 1} = [\bm{0}\transpose, B\sub{b, 1}\transpose]\transpose$, $\Bar{B}\sub{k, 2} = [\bm{0}\transpose, B\sub{b, 2}\transpose]\transpose$
where
\begin{align*}
    A\sub{k, 1} &= \beta\sub{1} \sqrt{\Delta t} \begin{bmatrix} 1.0 & 0 \\ 0.5 & 0 \end{bmatrix}, &  A\sub{k, 2} &= \beta\sub{2} \sqrt{\Delta t} \begin{bmatrix} 0 & 0.5 \\0 & 1.0 \end{bmatrix}, \\
    B\sub{k, 1} &= \theta\sub{1} \sqrt{\Delta t} \begin{bmatrix} 1.0 & 0\\0.5 & 0 \end{bmatrix}, & B\sub{k, 2} & = \theta\sub{2} \sqrt{\Delta t} \begin{bmatrix} 0 & 0.5 \\ 0 & 1 \end{bmatrix} .
\end{align*}
Here $[\beta\sub{1}, \beta\sub{2}, \theta\sub{1}, \theta\sub{2}] = [0.1, 0.3, 0.1, 0.6]$ are the noise intensity parameters. 
It is worth mentioning that the assumption of state and control multiplicative noise for UAV path planning tasks is more relevant than the assumption of additive noise since it is harder to follow the reference trajectory that describes an aggressive, jerky maneuver for the low-level controllers. In particular, the disturbances that amplify the error between the desired and the actual speed and acceleration should be proportional to the magnitudes of the speed and the acceleration.

\begin{figure}[h]
    \begin{subfigure}{0.45\linewidth}
        \includegraphics[width=\linewidth]{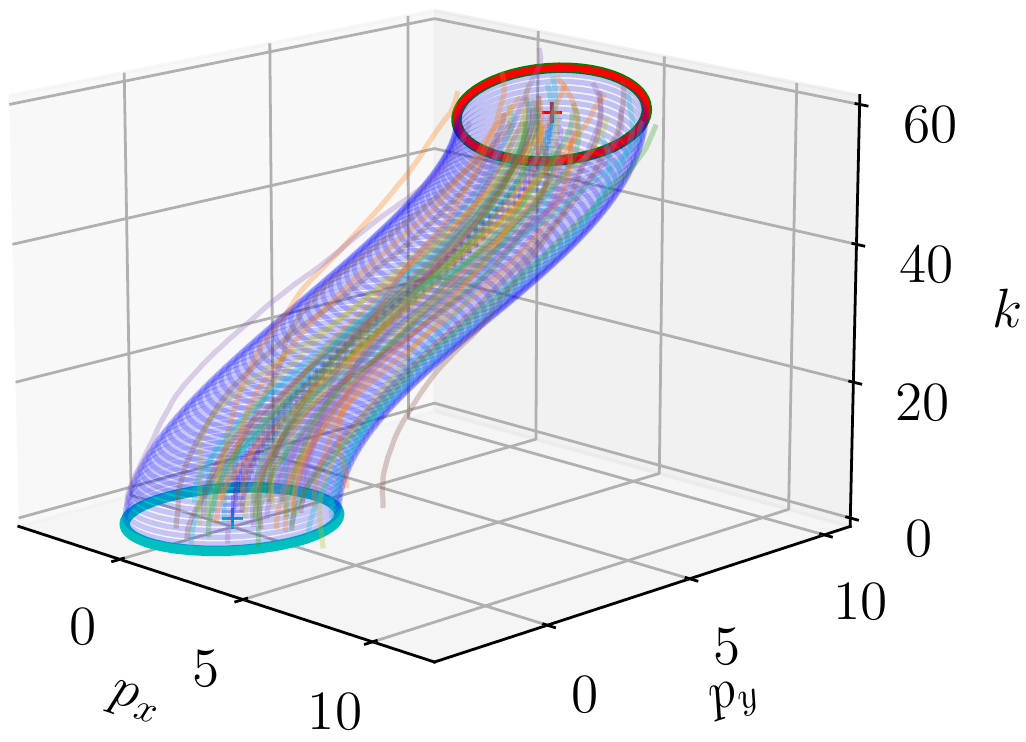}
        \subcaption{}\label{subfig:exact3duniform}
    \end{subfigure}
    \begin{subfigure}{0.45\linewidth}
        \includegraphics[width=\linewidth]{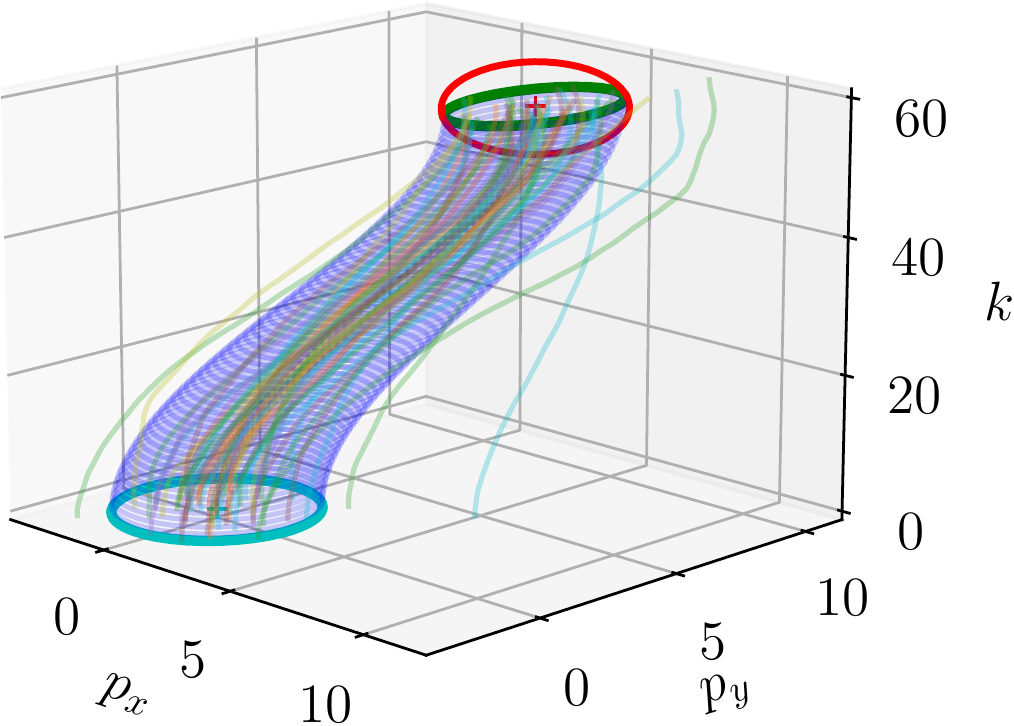}
        \subcaption{}\label{subfig:relaxed3duniform}
    \end{subfigure}
    \begin{subfigure}{0.45\linewidth}
        \includegraphics[width=\linewidth]{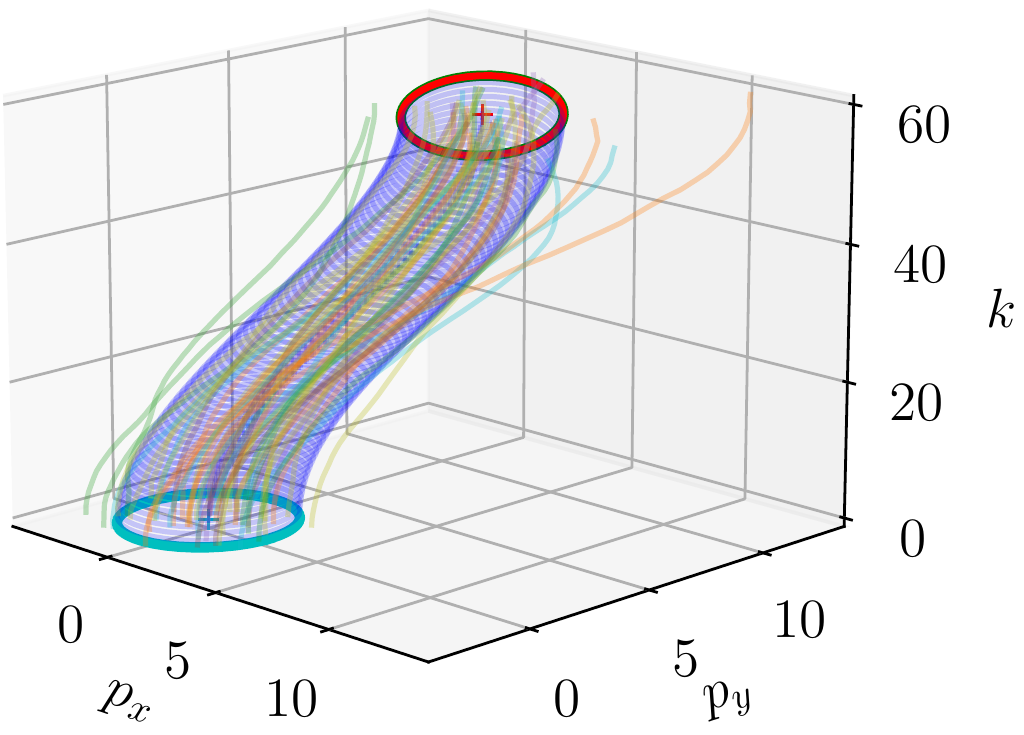}
        \subcaption{}\label{subfig:exact3dgaussian}
    \end{subfigure}
    \begin{subfigure}{0.45\linewidth}
        \includegraphics[width=\linewidth]{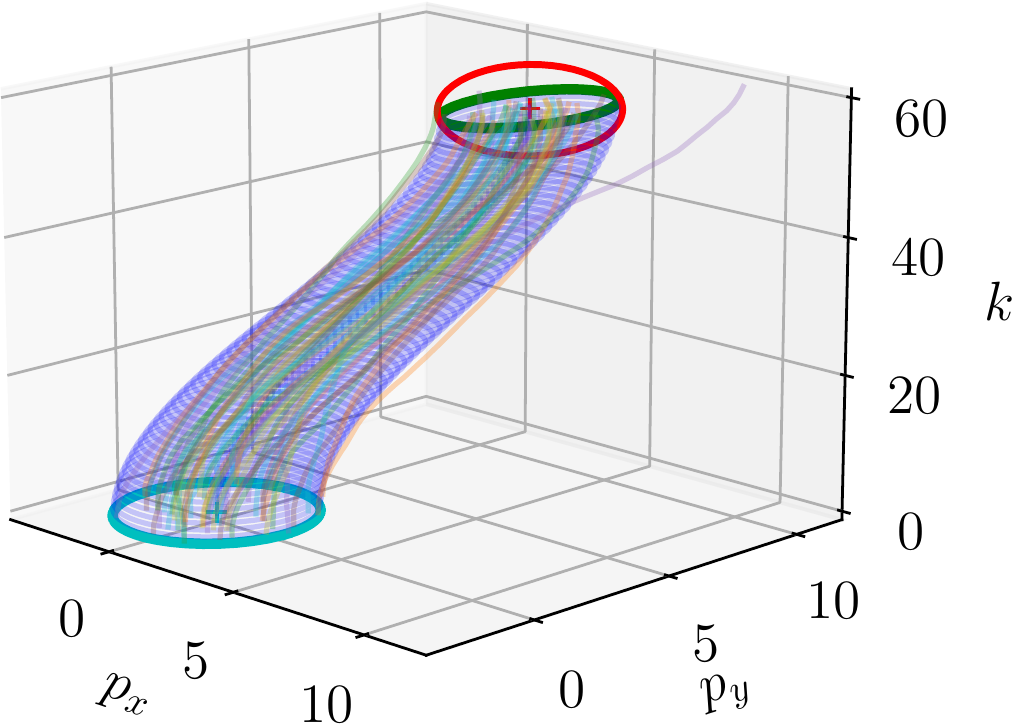}
        \subcaption{}\label{subfig:relaxed3dgaussian}
    \end{subfigure}
    \begin{subfigure}{0.45\linewidth}
        \includegraphics[width=\linewidth]{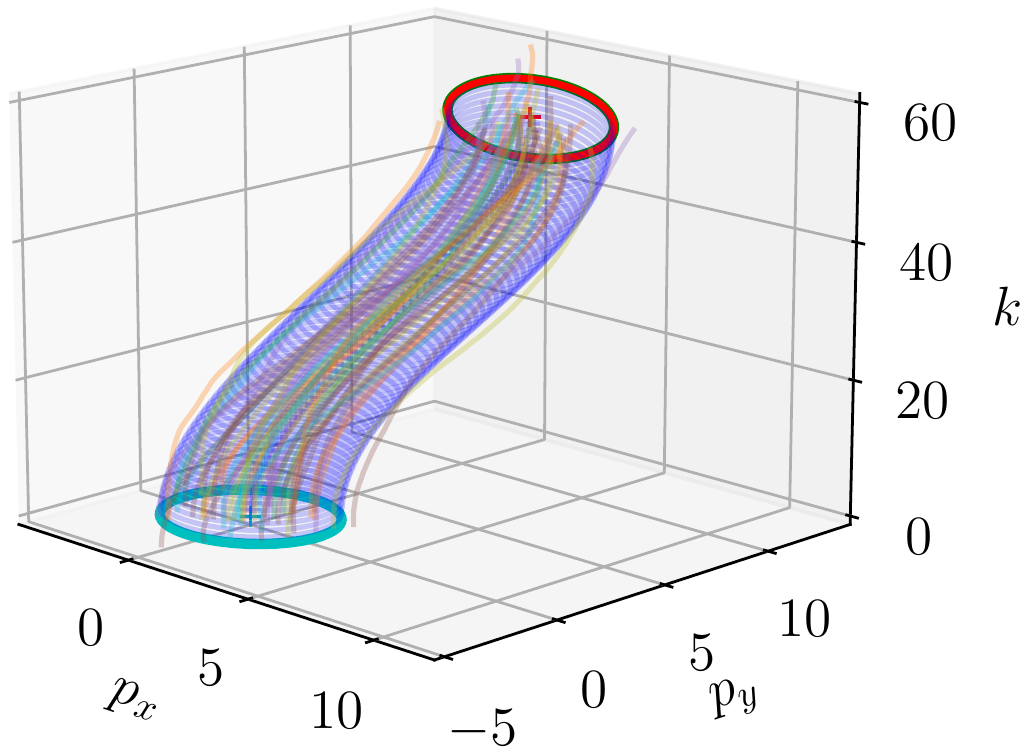}
        \subcaption{}\label{subfig:exact3ddiscrete}
    \end{subfigure}
    ~~~~
    \begin{subfigure}{0.45\linewidth}
        \includegraphics[width=\linewidth]{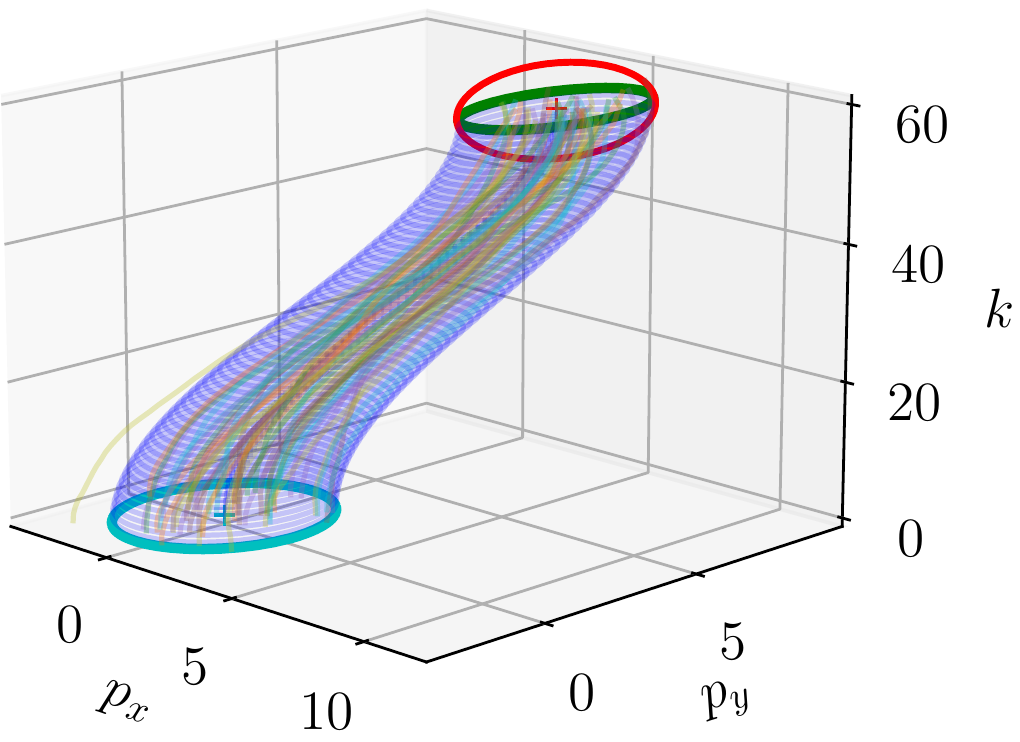}
        \subcaption{}\label{subfig:relaxed3ddiscrete}
    \end{subfigure}
    \caption{Evolution of the state statistics and sample trajectories under different noise distributions. The figures on the left (Figure \ref{subfig:exact3duniform}, \ref{subfig:exact3dgaussian} \ref{subfig:exact3ddiscrete}) show the results of the exact CS problem whereas the ones on the right (Figures \ref{subfig:relaxed3duniform}, \ref{subfig:relaxed3dgaussian}, \ref{subfig:relaxed3ddiscrete}) correspond to the relaxed CS problem. 
    Cyan, red and green ellipsoids correspond to the 2-$\sigma$ confidence ellipsoids of initial, desired, and terminal covariances, respectively. 
    The multiplicative noise terms $\delta\sub{k,\ell}, \gamma\sub{k,\ell}$ have uniform distribution over $[-\sqrt{3}, \sqrt{3}]$ in Figures \ref{subfig:exact3duniform}, \ref{subfig:relaxed3duniform}, unit normal distribution in Figures \ref{subfig:exact3dgaussian}, \ref{subfig:relaxed3dgaussian} and uniform distribution over $\{ -\sqrt{1.5}, 0, \sqrt{1.5} \}$ in Figures \ref{subfig:exact3ddiscrete}, \ref{subfig:relaxed3ddiscrete}. }\label{fig:3dplot}
\end{figure}

\begin{figure}[h]
    \centering
    \begin{subfigure}{0.45\linewidth}
        \includegraphics[width=\linewidth]{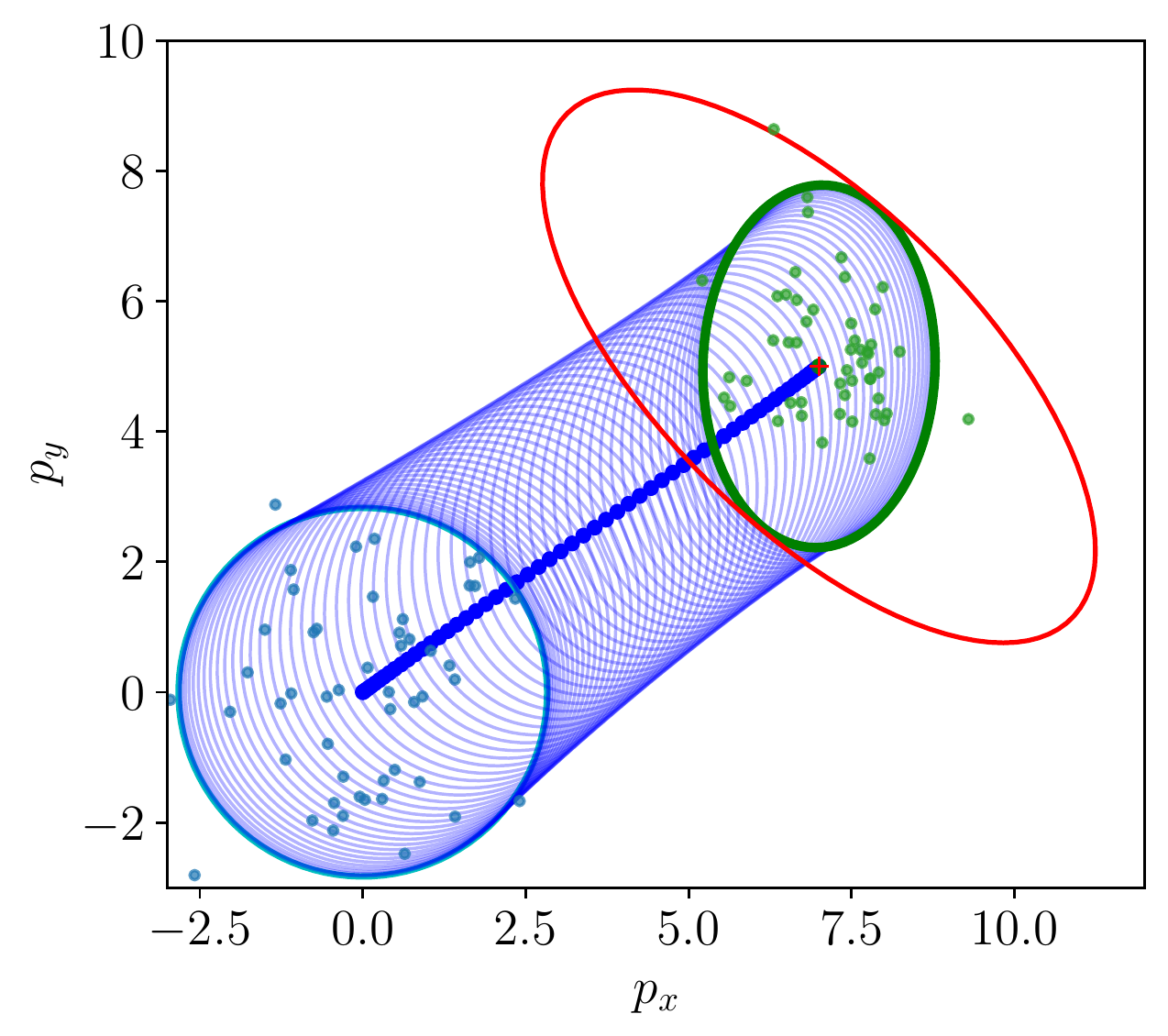}
        \subcaption{}\label{subfig:relaxed2ddiscrete}
    \end{subfigure}
    ~
    \begin{subfigure}{0.45\linewidth}
        \includegraphics[width=\linewidth]{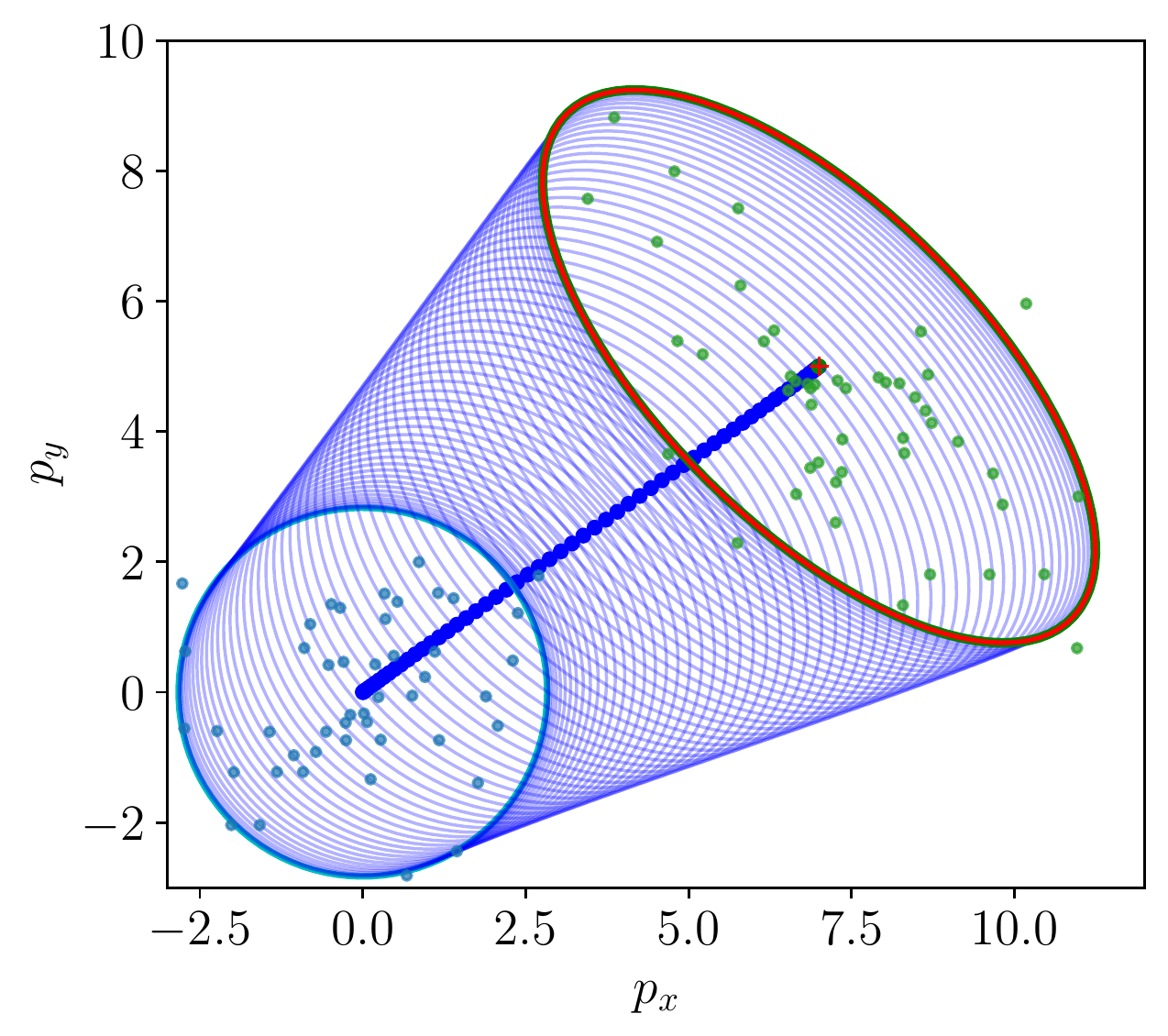}
        \subcaption{}\label{subfig:exact2ddiscrete}
    \end{subfigure}
    ~
    \begin{subfigure}{0.45\linewidth}
        \includegraphics[width=\linewidth]{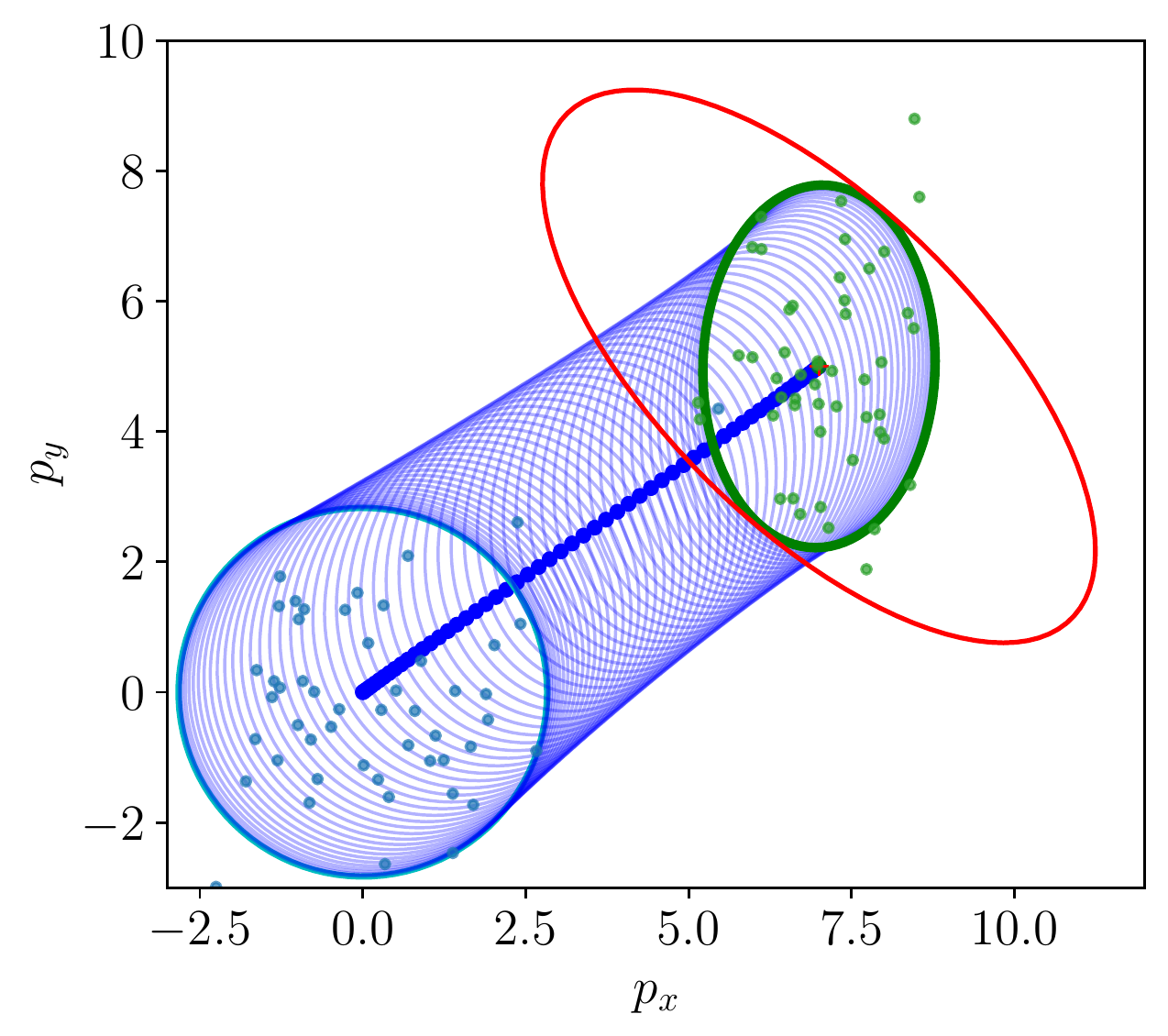}
        \subcaption{}\label{subfig:relaxed2duniform}
    \end{subfigure}
    ~
    \begin{subfigure}{0.45\linewidth}
        \includegraphics[width=\linewidth]{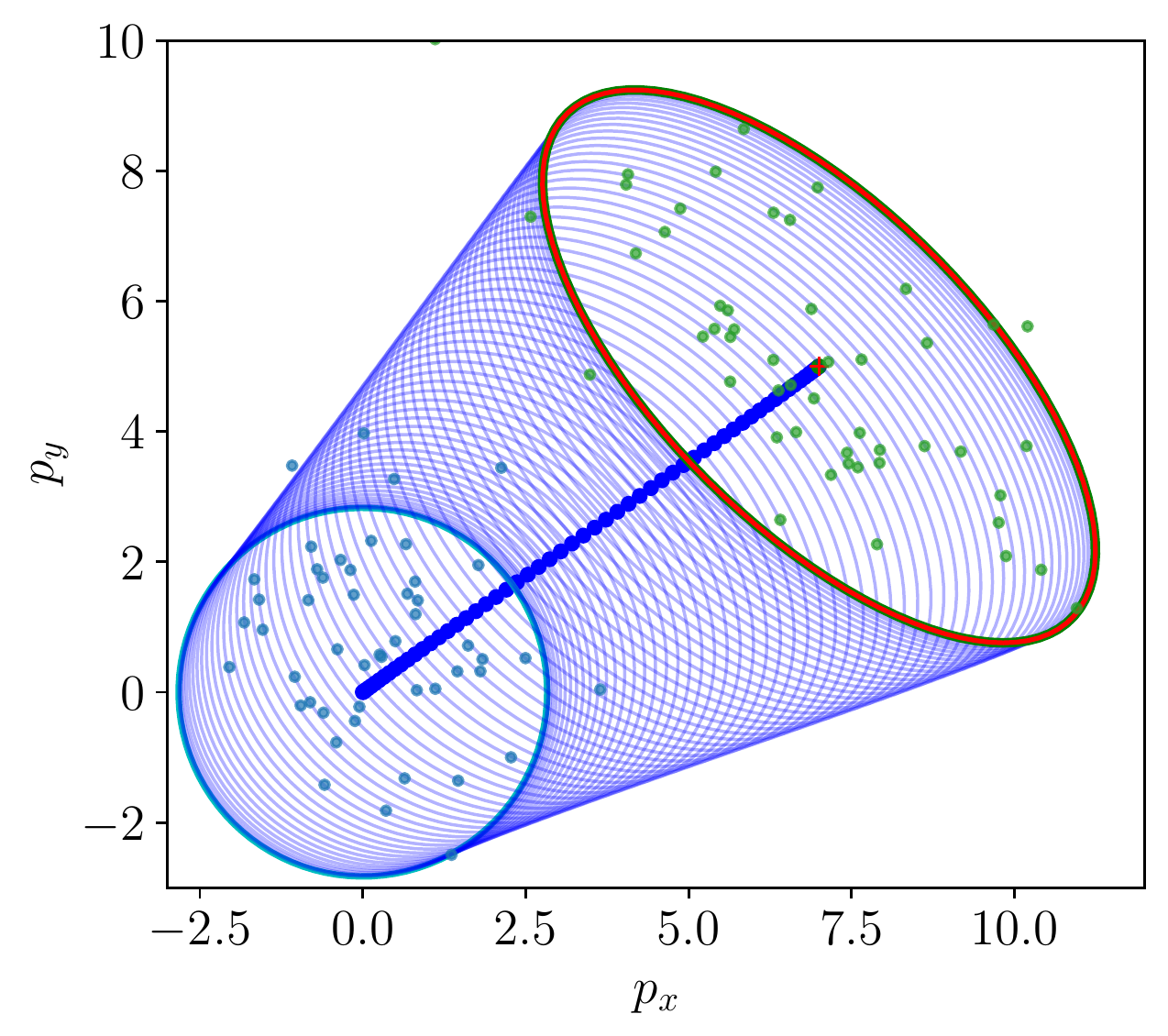}
        \subcaption{}\label{subfig:exact2duniform}
    \end{subfigure}
    \caption{Evolution of state statistics along with samples from initial and terminal distributions. Samples from the initial and the terminal distributions are illustrated by blue and green dots, respectively.  }\label{fig:plot2d}
\end{figure}

The initial state distribution of the UAV is a multivariate Gaussian with zero mean and covariance matrix $\Sigma\sub{0} = \bdiag{2.0 I\sub{2}, 0.01 I\sub{2}}$ whereas the desired mean and covariance matrix are given as $\mu\sub{\mathrm{d}} = [7.0, 5.0, 0.0, 0.0]\transpose$, $\Sigma\sub{\mathrm{d}} = \bdiag{\begin{smat} 4.5 & -3.0 \\ -3.0 & 4.5 \end{smat}, 0.1 I\sub{2}}$. Finally, the problem horizon is given as $N = 60$.

In Figure \ref{fig:3dplot}, the evolution of the state mean and covariance together with sample trajectories of the UAV dynamics under the control policies obtained by solving both the exact and the relaxed CS problems are presented.
In Figure \ref{fig:plot2d}, we illustrate the initial and terminal covariance matrices along with samples from the initial and terminal distributions. Note that the state of the UAV is modeled as a 2-dimensional (vector) double integrator (4 states). However, we only show the distribution of the position in the $x-y$ plane. In both Figure \ref{fig:3dplot} and \ref{fig:plot2d}, we sample 80 trajectories. It can be seen that terminal covariance constraints are satisfied for different multiplicative noise distributions.  

\section{Conclusion}\label{s:conclusion}
In this paper, we have addressed the exact and relaxed versions of the CS problem for discrete-time linear systems subject to mixed additive and multiplicative noise. We first recast the relaxed CS problem as a convex SDP. 
Then, we proposed a two-step solution method which leverages the solution to the relaxed CS problem to solve the exact CS problem. Finally, we gave an example that shows the necessity of randomized policies for the exact CS problem and provided a condition that guarantees the set of deterministic policies is sufficiently rich to address the latter problem. 
We also demonstrated in our numerical simulations, however, that the optimal policy may turn out to be deterministic, even when the provided condition is violated. 

In our future work, we plan to establish whether the solution procedure provided for the exact CS problem returns the globally optimal solution or not and provide a stronger condition that would guarantee that the deterministic policies can form a set of policies that is sufficiently rich for optimality.

\normalem
\bibliographystyle{ieeetr}
\bibliography{multCS.bib}

\end{document}